\DeclareMathOperator{\Max}{Max}
\newtheorem{theorem}{Theorem}[section]
\newtheorem{definition}[theorem]{Definition}
\newtheorem{lemma}[theorem]{Lemma}
\newtheorem{proposition}[theorem]{Proposition}
\newtheorem{remark}[theorem]{Remark}
\newtheorem{example}[theorem]{Example}
\newtheorem{corollary}[theorem]{Corollary}
\title{Special filters in bounded lattices}
\author{Ivan~Chajda, Miroslav~Kola\v r\'ik and Helmut~L\"anger}
\date{}
\begin{document}

\footnotetext{Support of the research of the first author by IGA, project P\v rF~2023~010, and of the third author by the Austrian Science Fund (FWF), project I~4579-N, entitled ``The many facets of orthomodularity'', is gratefully acknowledged.}

\maketitle

\begin{abstract}
M.S.\ Rao recently investigated some sorts of special filters in distributive pseudocomplemented lattices. In our paper we extend this study to lattices which need neither be distributive nor pseudocomplemented. For this sake we define a certain modification of the notion of a pseudocomplement as the set of all maximal elements belonging to the annihilator of the corresponding element. We prove several basic properties of this notion and then define coherent, closed and median filters as well as $D$-filters. In order to be able to obtain valuable results we often must add some additional assumptions on the underlying lattice, e.g.\ that this lattice is Stonean or $D$-Stonean. Our results relate properties of lattices and of corresponding filters. We show how the structure of a lattice influences the form of its filters and vice versa.
\end{abstract}

{\bf AMS Subject Classification:} 06B05, 06B10, 06D15, 06D20

{\bf Keywords:} Filter, coherent filter, closed filter, median filter, D-filter, Stonean lattice, $D$-Stonean lattice

\section{Introduction and basic concepts}

M.S.~Rao recently studied various types of filters and ideals in distributive lattices with pseudocomplements, see \cite{R22} and \cite{Ra}. Some of these findings require the lattice under consideration to be either Stone or weakly Stone, as indicated in the referenced papers. There is the question whether the distributivity assumption can be relaxed and, more specifically, if the pseudocomplementation assumption can be eliminated. This approach, in which we consider instead of the pseudocomplement a certain subset (in fact an antichain) of elements behaving similar to the pseudocomplement, has already been introduced by I.~Chajda and H.~L\"anger in \cite{CL}. In this paper the authors show that this construction can be successfully used in order to introduce the implication and negation connectives in an intuitionistic-like logic based on any bounded lattice that satisfies the Ascending Chain Condition.

In this paper we will deal with bounded lattices $\mathbf L=(L,\vee,\wedge,0,1)$ satisfying the Ascending Chain Condition (ACC). We identify singletons with their unique element, i.e.\ we will write $a$ instead of $\{a\}$. For $a\in L$ and for subsets $A,B$ of $L$ we define
\begin{align*}
  A\vee B & :=\{x\vee y\mid x\in A\text{ and }y\in B\}, \\
A\wedge B & :=\{x\wedge y\mid x\in A\text{ and }y\in B\}, \\
      A^0 & :=\Max\{x\in L\mid x\wedge y=0\text{ for all }y\in A\}, \\
      a^0 & :=\Max\{x\in L\mid x\wedge a=0\}.
\end{align*}
Here and in the following $\Max A$ denotes the set of all maximal elements of $A$ which obviously is an antichain. Observe that $0^0=1$ and $1^0=0$ and that $A^0\neq\emptyset$. The set $a^0$ is in fact a generalization of the {\em pseudocomplement} of $a$ introduced by O.~Frink \cite F or a modification of the annihilator since the set
\[
\{x\in L\mid x\wedge y=0\text{ for all }y\in A\}
\]
is in fact the {\em annihilator} of the set $A$ as known in lattice theory. (The pseudocomplement of $a$ is the greatest element of $\{x\in L\mid x\wedge a=0\}$.) The advantage of our approach is that $a^0$ can be defined in any lattice with $0$. Of course, we must pay for this advantage by the fact that $a^0$ need not be an element of $L$, but may be a subset of $L$, namely an antichain of $\mathbf L$.

It is easy to see that in a bounded completely distributive lattice every element has a pseudocomplement.

The element $a$ is called {\em dense} if $a^0=0$ and {\em sharp} if $a^{00}=a$. Let $D$ ($S$) denote the set of all dense (sharp) elements of $L$. Clearly the following holds:
\begin{align*}
& a\in D\text{ if and only if }a^{00}=1, \\
& 1\in D, \\
& 0,1\in S, \\
& D\cap S=1.
\end{align*}
Moreover, we define the following binary relations and unary operators on $2^L$:
\begin{align*}
    A\leq B & \text{ if }x\leq y\text{ for all }x\in A\text{ and all }y\in B, \\
  A\leq_1 B & \text{ if for every }x\in A\text{ there exists some }y\in B\text{ with }x\leq y, \\
  A\leq_2 B & \text{ if for every }y\in B\text{ there exists some }x\in A\text{ with }x\leq y, \\
      A=_1B & \text{ if both }A\leq_1 B\text{ and }B\leq_1 A, \\
\overline A & :=\{x\in L\mid1\in x^{00}\vee y^{00}\text{ for each }y\in A\}, \\
        A^D & :=\{x\in L\mid x\vee y\in D\text{ for all }y\in A\}.
\end{align*}
The set $A$ is called {\em closed} if $\overline{\overline A}=A$. Observe that $\overline0=D$ and $\overline1=L$.

\begin{lemma}\label{lem4}
Let $(L,\vee,\wedge,0,1)$ be a bounded lattice satisfying the {\rm ACC}. Then $\overline D=L$ and $\overline L=D$.
\end{lemma}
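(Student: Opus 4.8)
The plan is to compute both sets directly from the definition of $\overline A$, exploiting the characterisation recorded in the preamble that an element $y$ is dense precisely when $1\in y^{00}$, equivalently when $y^{00}=1$. The two elementary identities that will do all the work are $x^{00}\vee 1=1$ for every $x\in L$ (joining anything with the top element yields the top, and $x^{00}\neq\emptyset$ since the relevant sets of maximal elements are nonempty under the ACC) and $x^{00}\vee 0=x^{00}$, which uses $0^{00}=0$.

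For $\overline D=L$, I would take an arbitrary $x\in L$ together with an arbitrary $y\in D$. Since $y$ is dense we have $y^{00}=1$, so $x^{00}\vee y^{00}=x^{00}\vee 1=1$, which of course contains $1$. As $y\in D$ was arbitrary, the defining condition for $x\in\overline D$ is satisfied, so every element of $L$ lies in $\overline D$; the reverse inclusion $\overline D\subseteq L$ is trivial.

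For $\overline L=D$ I would prove the two inclusions separately. The inclusion $D\subseteq\overline L$ runs exactly as above: if $x\in D$ then $x^{00}=1$, hence $x^{00}\vee y^{00}=1$ contains $1$ for every $y\in L$, so $x\in\overline L$. For the reverse inclusion, suppose $x\in\overline L$, so the defining condition holds for all $y\in L$; specialising to $y=0$ and using $0^{00}=0$ gives $1\in x^{00}\vee 0^{00}=x^{00}$. It then remains to upgrade $1\in x^{00}$ to $x^{00}=1$, i.e.\ to $x\in D$.

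I expect the only point needing a moment's care to be precisely this last upgrade, and it is more of a bookkeeping remark than a genuine obstacle: since every element of $L$ is below $1$, the top element is comparable to everything, so an antichain containing $1$ can contain nothing else. As $x^{00}$ is by definition an antichain, $1\in x^{00}$ forces $x^{00}=1$, whence $x\in D$. This establishes $\overline L\subseteq D$ and completes the proof.
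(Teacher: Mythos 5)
Your proof is correct and follows essentially the same route as the paper's: both derive $\overline D=L$ from $y^{00}=1$ for dense $y$, and both obtain $\overline L\subseteq D$ by specialising the defining condition to $y=0$ (so that $1\in x^{00}\vee 0^{00}=x^{00}$) and then using the antichain property of $x^{00}$ to conclude $x^{00}=1$. Your explicit justification of the ``upgrade'' step $1\in x^{00}\Rightarrow x^{00}=1$ matches the paper's remark that $x^{00}$ is an antichain.
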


\begin{proof}
If $a\in L$ then $1\in a^{00}\vee x^{00}$ for all $x\in D$ and hence $a\in\overline D$. This shows $L\subseteq\overline D$, i.e.\ $\overline D=L$. Now let $a\in\overline L$. Then $1\in a^{00}\vee x^{00}$ for all $x\in L$, especially $1\in a^{00}\vee0^{00}=a^{00}$. Since $a^{00}$ is an antichain we have $a^{00}=1$ whence $a\in D$. This shows $\overline L\subseteq D$. Conversely, if $a\in D$ then $a^{00}=1$ and hence $1\in a^{00}\vee x^{00}$ for all $x\in L$, i.e.\ $a\in\overline L$. This shows $D\subseteq\overline L$. Altogether, we obtain $\overline L=D$.
\end{proof}

\begin{remark}
The pair $(A\mapsto\overline A,A\mapsto\overline A)$ is the Galois-correspondence between $(2^L,\subseteq)$ and $(2^L,\subseteq)$ induced by the binary relation $\{(x,y)\in L^2\mid1\in x^{00}\vee y^{00}\}$ on $L$. Hence $A\mapsto\overline{\overline A}$ is a closure operator on $(2^L,\subseteq)$ and therefore the set of all closed subsets of $L$ forms a closure system of $L$, i.e.\ it is closed under arbitrary intersections. This means that the closed subsets of $L$ form a complete lattice with respect to inclusion with smallest element $\overline L=D$ and greatest element $\overline\emptyset=L$ and we have the following properties:
\begin{align*}
                                A & \subseteq B\text{ implies }\overline B\subseteq\overline A, \\
                                A & \subseteq\overline{\overline A}, \\
\overline{\overline{\overline A}} & =\overline A, \\
                                A & \subseteq\overline B\text{ if and only if }B\subseteq\overline A.
\end{align*}
Analogously, the pair $(A\mapsto A^D,A\mapsto A^D)$ is the Galois-correspondence between $(2^L,\subseteq)$ and $(2^L,\subseteq)$ induced by the binary relation $\{(x,y)\in L^2\mid x\vee y\in D\}$ on $L$.
\end{remark}

Recall that a non-empty subset $F$ of $L$ is called a {\em filter} of $\mathbf L$ if $x\wedge y,x\vee z\in F$ for all $x,y\in F$ and all $z\in L$. Observe that for every $x\in L$ the set $F_x:=\{y\in L\mid x\leq y\}$ is a filter of $\mathbf L$, the so-called {\em principal filter} generated by $x$. A filter $F$ of $\mathbf L$ is called a {\em $D$-filter} if $D\subseteq F$. It is elementary that every filter of a finite lattice is principal.

\section{Filters and $D$-filters}

It is evident that if a given bounded lattice $\mathbf L=(L,\vee,\wedge,0,1)$ is pseudocomplemented then for each element $x\in L$ we have $x^0=x^*$. We start this section with emphasizing certain differences between the properties of our concept $x^0$ and pseudocomplements.

It is well-known that for distributive pseudocomplemented lattices $(L,\vee,\wedge,0)$ with bottom element $0$ and $a,b\in L$ the following holds:
\begin{enumerate}[{\rm(i)}]
\item $a\leq b$ implies $b^*\leq a^*$,
\item $a\leq a^{**}$ and $a^{***}=a^*$,
\item $(a\vee b)^*=a^*\wedge b^*$,
\item $(a\wedge b)^{**}=a^{**}\wedge b^{**}$.
\end{enumerate}

The following example shows that not all being valid for pseudocomplements, also holds for $x^0$.

\begin{example}
Consider the non-distributive lattice $\mathbf L$ depicted in Fig.~1:

\vspace*{-4mm}

\begin{center}
\setlength{\unitlength}{1.5mm}
\begin{picture}(75,45)
\put(10,20){\circle*{1.4}}
\put(20,10){\circle*{1.4}}
\put(20,30){\circle*{1.4}}
\put(30,20){\circle*{1.4}}
\put(40,30){\circle*{1.4}}
\put(50,20){\circle*{1.4}}
\put(50,40){\circle*{1.4}}
\put(60,30){\circle*{1.4}}
\put(70,30){\circle*{1.4}}
\put(20,10){\line(-1,1){10}}
\put(20,10){\line(1,1){10}}
\put(10,20){\line(1,1){10}}
\put(30,20){\line(-1,1){10}}
\put(20,10){\line(3,1){30}}
\put(30,20){\line(3,1){30}}
\put(10,20){\line(3,1){30}}
\put(20,30){\line(3,1){30}}
\put(50,20){\line(-1,1){10}}
\put(50,20){\line(1,1){10}}
\put(50,20){\line(2,1){20}}
\put(40,30){\line(1,1){10}}
\put(60,30){\line(-1,1){10}}
\put(70,30){\line(-2,1){20}}
\put(19.2,6.9){$0$}
\put(7.5,19){$a$}
\put(27.5,19){$b$}
\put(17.5,29){$c$}
\put(51,18.5){$d$}
\put(41.2,29.3){$e$}
\put(60.9,29){$f$}
\put(71,29){$g$}
\put(49.2,41.2){$1$}
\put(35,2){{\rm Fig.~1}}
\put(10,-2){{\rm Non-distributive non-pseudocomplemented lattice}}
\end{picture}
\end{center}
We have
\[
\begin{array}{r|r|r|r|r|r|r|r|r|r}
          x &   0 &   a &   b &   c &     d &     e &        f &     g &   1 \\
\hline
        x^0 &   1 &  fg &  eg &   g &     c &     c &        a &     c &   0 \\
\hline
     x^{00} &   0 &   a &   b &   c &     g &     g &       fg &     g & F_0 \\
\hline
\overline x & F_1 & F_d & F_d & F_d & abcf1 & abcf1 & abcdefg1 & abcf1 & L
\end{array}
\]
$D=F_1$ and $S:=\{0,b,c,g,1\}$. {\rm(}Here and in the following we often write $fg$ instead of $\{f,g\}$, and so on.{\rm)} Hence $\mathbf L$ is not pseudocomplemented. In $\mathbf L$ we have:
\begin{enumerate}[{\rm(i)}]
\item $a\leq c$ and $c^0=g\nleq\{f,g\}=a^0$, but $c^0\leq_1 a^0$.
\item $f\not\leq\{f,g\}=f^{00}$, but $f\leq_1f^{00}$.
\item $(a\vee b)^0=c^0=g\neq\{d,g\}=\{f,g\}\wedge\{e,g\}=a^0\wedge b^0$, but $(a\vee b)^0\leq_1a^0\wedge b^0$.
\item $(d\wedge f)^{00}=d^{00}=g\neq\{d,g\}=g\wedge\{f,g\}=d^{00}\wedge f^{00}$, but $(d\wedge f)^{00}=_1d^{00}\wedge f^{00}$.
\end{enumerate}
\end{example}

On the other hand, some of the properties of pseudocomplements are preserved also here, see the following result.

\begin{theorem}\label{th2}
Let $(L,\vee,\wedge,0,1)$ be a bounded lattice satisfying the {\rm ACC}, $x,y\in L$ and $A,B\subseteq L$. Then the following holds:
\begin{enumerate}[{\rm(i)}]
\item $a\wedge b=0$ for all $a\in A$ and all $b\in A^0$, especially, $x\wedge x^0=0$,
\item If $a\wedge b=0$ for all $a\in A$ and all $b\in B$ then $A\leq_1B^0$, especially, $x\wedge y=0$ implies $x\leq_1y^0$,
\item $A\leq_1A^{00}$, especially, $x\leq_1x^{00}$,
\item $A\leq_1B$ implies $B^0\leq_1A^0$, especially, $x\leq y$ implies $y^0\leq_1x^0$,
\item $A\leq_1B^0$ if and only if $B\leq_1A^0$, especially, $x\leq_1y^0$ if and only if $y\leq_1x^0$,
\item $a=_1B$ implies $a=B$,
\item $A^0=_1B^0$ implies $A^0=B^0$,
\item $A^{000}=A^0$, especially, $x^{000}=x^0$,
\item $(A\vee B)^0\leq_1A^0\wedge B^0$, especially, $(x\vee y)^0\leq_1x^0\wedge y^0$,
\item $(A\wedge B)^{00}=_1A^{00}\wedge B^{00}$, especially, $(x\wedge y)^{00}=_1x^{00}\wedge y^{00}$,
\item $A^{00}\vee B^{00}\leq_1(A^0\wedge B^0)^0$ implies $(A^{00}\vee B^{00})^0=_1A^0\wedge B^0$, especially, $x^{00}\vee y^{00}\leq_1(x^0\wedge y^0)^0$ implies $(x^{00}\vee y^{00})^0=_1x^0\wedge y^0$.
\end{enumerate}
\end{theorem}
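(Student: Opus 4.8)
The plan is to work throughout with the annihilator $N(A):=\{x\in L\mid x\wedge a=0\text{ for all }a\in A\}$, so that $A^0=\Max N(A)$. The one use of the {\rm ACC} that I would isolate first, as a standing lemma, is that $N(A)$ is a down-set and that by the {\rm ACC} every $x\in N(A)$ lies below some maximal element; hence $N(A)\leq_1 A^0$ and $A^0\subseteq N(A)$. With this in hand, (i) is just the inclusion $A^0\subseteq N(A)$, and (ii) says that the hypothesis is $A\subseteq N(B)$, so each $a\in A$ sits below some element of $\Max N(B)=B^0$. Then (iii) follows by feeding (i) into (ii): the elements of $A$ and of $A^0$ annihilate one another, so $A\leq_1(A^0)^0=A^{00}$. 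Item (iv) is monotonicity of $N$: if $A\leq_1 B$ then every $y\in B^0$ annihilates $A$ as well, so $y\in N(A)$ and lies below some element of $A^0$. For (v) I would prove the symmetric reformulation $A\leq_1 B^0\iff(a\wedge b=0\text{ for all }a\in A,\ b\in B)\iff B\leq_1 A^0$: the first equivalence combines (ii) with (i), and the middle condition is visibly symmetric in $A$ and $B$.

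The items (vi) and (vii) I would deduce from a single lemma: two antichains $C,D$ with $C=_1 D$ coincide. Indeed, for $c\in C$ choose $d\in D$ with $c\leq d$ and then $c'\in C$ with $d\leq c'$; comparability of $c,c'$ in the antichain $C$ forces $c=c'=d\in D$, and symmetrically $D\subseteq C$. Since $A^0,B^0$ are antichains, this gives (vii) at once; (vi) is the same statement with the (trivially antichain) singleton $\{a\}$, the one point to flag being that the conclusion $B=\{a\}$ uses that $B$ too is an antichain. For (viii) I would combine (iii) and (iv): from $A\leq_1 A^{00}$ and (iv) applied to it I get $A^{000}\leq_1 A^0$, while (iii) applied to $A^0$ gives $A^0\leq_1 A^{000}$; thus $A^0=_1(A^{00})^0$, and as both are antichains (vii) upgrades this to $A^{000}=A^0$. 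Finally (ix) is direct: any $z\in(A\vee B)^0$ annihilates every $a\vee b$, hence (since $a,b\leq a\vee b$) annihilates every $a$ and every $b$, so $z$ lies below some $u\in A^0$ and some $v\in B^0$, whence $z\leq u\wedge v\in A^0\wedge B^0$.

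The real work is (x), and the device I would prepare for it is the biconditional $N(A)=N(A^{00})$. The inclusion $N(A)\subseteq N(A^{00})$ holds because $x\in N(A)$ lies below some $s\in A^0$, and $s$ annihilates $A^{00}$ by (i); the reverse inclusion uses $A\leq_1 A^{00}$ from (iii). The easy half of (x), namely $(A\wedge B)^{00}\leq_1 A^{00}\wedge B^{00}$, then follows exactly as (ix) did, from $A\wedge B\leq_1 A$ and $A\wedge B\leq_1 B$ together with the $\leq_1$-monotonicity of $(\cdot)^{00}$ obtained by iterating (iv). The hard half, $A^{00}\wedge B^{00}\leq_1(A\wedge B)^{00}$, is where I expect the main obstacle and where the biconditional pays off: fixing $t\in(A\wedge B)^0$ and $u\in A^{00}$, $v\in B^{00}$, for each $b\in B$ the element $t\wedge b$ annihilates $A$, hence by $N(A)=N(A^{00})$ it annihilates $u$, giving $t\wedge u\wedge b=0$; letting $b$ range shows $t\wedge u$ annihilates $B$, hence by $N(B)=N(B^{00})$ it annihilates $v$, so $t\wedge u\wedge v=0$. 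As $t\in(A\wedge B)^0$ was arbitrary, $u\wedge v\in N\big((A\wedge B)^0\big)$ and therefore lies below some element of $(A\wedge B)^{00}$.

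For (xi) I would separate the two inequalities and note that only one of them needs the hypothesis. The inequality $(A^{00}\vee B^{00})^0\leq_1 A^0\wedge B^0$ is free: apply (ix) to $A^{00},B^{00}$ and then rewrite $(A^{00})^0=A^{000}=A^0$ and $(B^{00})^0=B^0$ via (viii). For the converse $A^0\wedge B^0\leq_1(A^{00}\vee B^{00})^0$, take $s\in A^0$, $t\in B^0$ and any $u\in A^{00}$, $v\in B^{00}$; here $s\wedge u=0$ and $t\wedge v=0$ by (i), but without distributivity this does \emph{not} give $(s\wedge t)\wedge(u\vee v)=0$, which is precisely the gap the hypothesis fills. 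Indeed the hypothesis provides $w\in(A^0\wedge B^0)^0$ with $u\vee v\leq w$, and $w$ annihilates $A^0\wedge B^0\ni s\wedge t$, so $(s\wedge t)\wedge(u\vee v)\leq(s\wedge t)\wedge w=0$. Letting $u,v$ range shows $s\wedge t\in N(A^{00}\vee B^{00})$, hence it lies below some element of $(A^{00}\vee B^{00})^0$; combining the two inequalities yields $=_1$. Throughout, every \emph{especially} clause is the instance $A=\{x\}$, $B=\{y\}$ and needs no separate argument.
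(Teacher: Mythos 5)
Your proposal is correct, and for items (i)--(v), (vii)--(ix) and (xi) it is essentially the paper's own argument: the paper likewise derives (iii) from (i) and (ii), gets (v) by chaining (iii) and (iv) (where you use the symmetric annihilation condition instead --- a trivial variant), proves (viii) from (iii), (iv) and (vii), proves (ix) from $A,B\leq_1 A\vee B$ and (iv), and its proof of (xi) is the single chain $A^0\wedge B^0\leq_1(A^0\wedge B^0)^{00}\leq_1(A^{00}\vee B^{00})^0\leq_1 A^{000}\wedge B^{000}=A^0\wedge B^0$ (by (iii), the hypothesis with (iv), (ix), (viii)), of which your hands-on computation with $w\in(A^0\wedge B^0)^0$ is just the unwound form. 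The genuine divergence is (x): the paper first proves $(A\wedge B)^{00}=_1(A^{00}\wedge B^{00})^{00}$ via a cascade of nine implications starting from $(A\wedge B)^0\wedge A^{00}\wedge B^{00}\wedge A\wedge B\subseteq\{0\}$, upgrades this to a set equality by (vii), and then separately shows $(A^{00}\wedge B^{00})^{00}=_1A^{00}\wedge B^{00}$ using (iv) and (viii); you instead isolate the reusable lemma $N(A)=N(A^{00})$ for the annihilator $N(A)=\{x\in L\mid x\wedge a=0\text{ for all }a\in A\}$ and prove both $\leq_1$-inequalities between $(A\wedge B)^{00}$ and $A^{00}\wedge B^{00}$ directly. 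The computational core is the same --- the paper's steps ``$\leq_1 B^0$, then $\leq_1 B^0\wedge B^{00}$, then $\subseteq\{0\}$'' are precisely your $N(B)\subseteq N(B^{00})$ applied to $t\wedge u\wedge b$ --- but your packaging is shorter, avoids the detour through $(A^{00}\wedge B^{00})^{00}$ and the appeal to (vii) inside (x), and makes the one use of the ACC explicit in a standing lemma where the paper leaves it implicit in (ii). Finally, your caveat at (vi) is exactly right and matches the paper's actual proof: as literally stated, $a=_1B$ implies $a=B$ fails when $B$ is not an antichain (take $B=\{0,a\}$ with $a\neq0$), and the paper's proof of (vi) tacitly argues about the antichain $B^0$ rather than $B$; since every later application of (vi) in the paper has $B$ of the form $C^{00}$, hence an antichain, your antichain lemma covers precisely what is needed.
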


\begin{proof}
\
\begin{enumerate}[(i)]
\item If $b\in A\wedge A^0$ then there exists some $c\in A$ and some $d\in A^0$ with $c\wedge d=b$. Since $x\wedge d=0$ for all $x\in A$, we have $b=c\wedge d=0$.
\item If $A\wedge B=0$ and $b\in A$ then $b\wedge x=0$ for all $x\in B$ and hence there exists some $c\in B^0$ with $b\leq c$.
\item This follows from (i) and (ii).
\item If $A\leq_1B$ then
\[
\{x\in L\mid x\wedge B=0\}\subseteq\{x\in L\mid x\wedge A=0\}
\]
and hence $B^0\leq_1A^0$.
\item If $A\leq_1B^0$ then $B\leq_1B^{00}\leq_1A^0$ by (iii) and (iv). Analogously, $B\leq_1A^0$ implies $A\leq_1B^0$.
\item Let $b\in B^0$. Because of $B^0\leq_1a$ we have $b\leq a$, and because of $a\leq_1B^0$ there exists some $c\in B^0$ with $a\leq c$. Together we obtain $b\leq a\leq c$ and hence $b\leq c$. Since both $b$ and $c$ belong to the antichain $B^0$ we conclude $b=c$ and hence $b=a$ showing $B^0=a$.
\item Assume $b\in A^0$. Because of $A^0\leq_1B^0$ there exists some $c\in B^0$ with $b\leq c$, and because of $B^0\leq_1A^0$ there exists some $d\in A^0$ with $c\leq d$. Together we obtain $b\leq c\leq d$ and hence $b\leq d$. Since both $b$ and $d$ belong to the antichain $A^0$ we conclude $b=d$ and hence $b=c\in B^0$. This shows $A^0\subseteq B^0$. Interchanging the roles of $A^0$ and $B^0$ gives $B^0\subseteq A^0$. Together we obtain $A^0=B^0$.
\item Because of (iii) we have $A\leq_1A^{00}$ from which we conclude $A^{000}=(A^{00})^0\leq_1A^0$ by (iv). Again by (iii) we have $A^0\leq_1(A^0)^{00}=A^{000}$. Altogether, $A^{000}=_1A^0$. Applying (vii) yields $A^{000}=A^0$.
\item We have $A,B\leq_1A\vee B$ and hence $(A\vee B)^0\leq_1A^0,B^0$ by (iv) which implies $(A\vee B)^0\leq_1A^0\wedge B^0$.
\item We have $A\wedge B\leq_1A^{00}\wedge B^{00}$ according to (iii) and hence $(A\wedge B)^{00}\leq_1(A^{00}\wedge B^{00})^{00}$ according to (iv). Using (ii), (i) and (iv) we see that any of the following statements implies the next one:
\begin{align*}
(A\wedge B)^0\wedge A^{00}\wedge B^{00}\wedge A\wedge B & \subseteq\{0\}, \\
        (A\wedge B)^0\wedge A^{00}\wedge B^{00}\wedge A & \leq_1B^0, \\
        (A\wedge B)^0\wedge A^{00}\wedge B^{00}\wedge A & \leq_1B^0\wedge B^{00}, \\
        (A\wedge B)^0\wedge A^{00}\wedge B^{00}\wedge A & \subseteq\{0\}, \\
                (A\wedge B)^0\wedge A^{00}\wedge B^{00} & \leq_1A^0, \\
                (A\wedge B)^0\wedge A^{00}\wedge B^{00} & \leq_1A^0\wedge A^{00}, \\
                (A\wedge B)^0\wedge A^{00}\wedge B^{00} & =0, \\
                                          (A\wedge B)^0 & \leq_1(A^{00}\wedge B^{00})^0, \\
                             (A^{00}\wedge B^{00})^{00} & \leq_1(A\wedge B)^{00}.
\end{align*}
Together we obtain $(A\wedge B)^{00}=_1(A^{00}\wedge B^{00})^{00}$ and hence $(A\wedge B)^{00}=(A^{00}\wedge B^{00})^{00}$ according to (vii). Now we have $A^{00}\wedge B^{00}\leq_1A^{00},B^{00}$ and hence $(A^{00}\wedge B^{00})^{00}\leq_1A^{00},B^{00}$ by (iv) and (viii) which implies $(A^{00}\wedge B^{00})^{00}\leq_1A^{00}\wedge B^{00}$. Together with $A^{00}\wedge B^{00}\leq_1(A^{00}\wedge B^{00})^{00}$ that follows from (iii) this yields $(A\wedge B)^{00}=_1(A^{00}\wedge B^{00})^{00}=_1A^{00}\wedge B^{00}$.
\item According to (iii), (iv), (ix), (viii) and the assumption we have
\[
A^0\wedge B^0\leq_1(A^0\wedge B^0)^{00}\leq_1(A^{00}\vee B^{00})^0\leq_1A^{000}\wedge B^{000}=A^0\wedge B^0.
\]
\end{enumerate}
\end{proof}

The theorem just proved is very important because we will use it many times in the proofs of most of the following statements.

\begin{remark}\label{rem}
Assume $a,b\in L$ and $a^{00}\vee b^{00}\leq_1(a^0\wedge b^0)^0$. Then because of {\rm(xi)} of Theorem~\ref{th2} we have $(a^{00}\vee b^{00})^0=_1a^0\wedge b^0$. Hence any of the following statements implies the next one:
\begin{align*}
                    1 & \in a^{00}\vee b^{00}, \\
                    1 & \leq_1a^{00}\vee b^{00}, \\
(a^{00}\vee b^{00})^0 & \leq_10, \\
(a^{00}\vee b^{00})^0 & =0, \\
        a^0\wedge b^0 & =0.
\end{align*}
\end{remark}

Applying the previous results, we can show that the set of sharp elements of $\mathbf L=(L,\vee,\wedge,0,1)$ forms a subsemilattice of $(L,\wedge)$.

\begin{proposition}\label{sub}
Let $(L,\vee,\wedge,0,1)$ be a bounded lattice satisfying the {\rm ACC} and $a,b\in S$. Then $a\wedge b\in S$.
\end{proposition}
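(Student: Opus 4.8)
The plan is to reduce the claim $(a\wedge b)^{00}=a\wedge b$ to the relation already established in part~(x) of Theorem~\ref{th2}, and then to upgrade the resulting $=_1$-equivalence to a genuine equality by means of part~(vi).

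First I would invoke Theorem~\ref{th2}(x) with $x=a$ and $y=b$, which gives $(a\wedge b)^{00}=_1a^{00}\wedge b^{00}$. Next, using that $a$ and $b$ are sharp, I would substitute $a^{00}=a$ and $b^{00}=b$; since singletons are identified with their unique element, $a^{00}\wedge b^{00}$ is simply the one-element set $\{a\wedge b\}=a\wedge b$. Combining these yields $(a\wedge b)^{00}=_1a\wedge b$.

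Finally, I would observe that $(a\wedge b)^{00}$ is an antichain, being a set of maximal elements. Writing the relation as $a\wedge b=_1(a\wedge b)^{00}$ (recall that $=_1$ is symmetric, since it is defined as $\leq_1$ in both directions), this is exactly the hypothesis of Theorem~\ref{th2}(vi) with the element $a\wedge b$ in the role of the left-hand singleton and the set $(a\wedge b)^{00}$ in the role of $B$. That part then forces $(a\wedge b)^{00}=a\wedge b$, which is precisely the assertion that $a\wedge b$ is sharp.

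I expect no serious obstacle: the whole argument is a two-line application of~(x) followed by~(vi). The only point requiring a little care is the bookkeeping of the singleton identification, namely checking that once $a^{00}$ and $b^{00}$ collapse to the single elements $a$ and $b$, the set $a^{00}\wedge b^{00}$ is correctly recognized as the one-element set $\{a\wedge b\}$ rather than a genuinely multi-element antichain, so that the element-versus-set hypothesis of~(vi) is legitimately met.
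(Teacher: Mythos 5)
Your proof is correct and follows essentially the same route as the paper: apply Theorem~\ref{th2}(x) to get $(a\wedge b)^{00}=_1a^{00}\wedge b^{00}=a\wedge b$, then use Theorem~\ref{th2}(vi) to upgrade the $=_1$-relation to a genuine equality. Your extra care with the singleton identification is sound and matches the paper's implicit usage.
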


\begin{proof}
According to (x) of Theorem~\ref{th2} we have $(a\wedge b)^{00}=_1a^{00}\wedge b^{00}=a\wedge b$ and hence $(a\wedge b)^{00}=a\wedge b$ by (vi) of Theorem~\ref{th2}, i.e. $a\wedge b\in S$.
\end{proof}

\begin{remark}
Unfortunately, $S$ need not be a sublattice of $\mathbf L$. Namely, consider the lattice $\mathbf L$ visualized in Fig.~2. Here, obviously, $a,b\in S$ and also $a\wedge b=0\in S$ in accordance with Proposition~\ref{sub}, but $a\vee b=e\notin S$ since $e^{00}=1\neq e$.
\end{remark}

Now we prove an expected statement about the set of dense elements.

\begin{proposition}\label{prop1}
Let $\mathbf L=(L,\vee,\wedge,0,1)$ be a bounded lattice satisfying the {\rm ACC}. Then $D$ forms a $D$-filter of $\mathbf L$.
\end{proposition}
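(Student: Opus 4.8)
The plan is to verify directly that $D$ satisfies the defining conditions of a $D$-filter. Recall that a non-empty $F\subseteq L$ is a filter precisely when it is closed under meets of its own elements and under joins with arbitrary elements of $L$ (i.e.\ $x\wedge y\in F$ and $x\vee z\in F$ for all $x,y\in F$, $z\in L$), and that $F$ is a $D$-filter when additionally $D\subseteq F$. Since here $F=D$, the inclusion $D\subseteq F$ is automatic, so the whole task reduces to showing that $D$ is a filter. Throughout I will use the two equivalent descriptions of density recorded before the statement: $a\in D$ iff $a^0=0$ iff $a^{00}=1$.

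The easy parts are non-emptiness and upward closure. Non-emptiness is immediate from $1\in D$. For upward closure, fix $a\in D$ and $z\in L$ and aim to show $a\vee z\in D$. From $a\leq a\vee z$, i.e.\ $\{a\}\leq_1\{a\vee z\}$, Theorem~\ref{th2}(iv) yields $(a\vee z)^0\leq_1a^0=0$. Thus every element of the antichain $(a\vee z)^0$ lies below $0$ and hence equals $0$; as $(a\vee z)^0\neq\emptyset$ we get $(a\vee z)^0=0$, so $a\vee z\in D$. (Alternatively this can be seen at the level of annihilators: $x\wedge(a\vee z)=0$ forces $x\wedge a=0$, hence $x=0$.)

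The crux is closure under meets, and this is where I expect the real work to lie: without distributivity there is no reason a priori for the meet of two dense elements to be dense, so a bare annihilator computation does not suffice and Theorem~\ref{th2} must be invoked. Given $a,b\in D$ we have $a^{00}=b^{00}=1$, and I would apply Theorem~\ref{th2}(x) to obtain $(a\wedge b)^{00}=_1a^{00}\wedge b^{00}=1\wedge1=1$. The only subtlety is passing from the relation $=_1$ back to genuine equality: since $=_1$ is symmetric we have $1=_1(a\wedge b)^{00}$, and Theorem~\ref{th2}(vi), applied with the singleton $1$, upgrades this to $(a\wedge b)^{00}=1$. Hence $a\wedge b\in D$.

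Combining the three observations, $D$ is a non-empty subset of $L$ closed under meets of its elements and under joins with arbitrary elements, so $D$ is a filter; together with the trivial inclusion $D\subseteq D$ this shows that $D$ is a $D$-filter. The main obstacle is thus confined to the meet step, whose resolution depends on the $=_1$-version of the identity $(a\wedge b)^{00}=_1a^{00}\wedge b^{00}$ and on the antichain argument of Theorem~\ref{th2}(vi) that sharpens $=_1$ to $=$.
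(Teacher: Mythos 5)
Your proof is correct and follows essentially the same route as the paper's: meet-closure via Theorem~\ref{th2}(x) upgraded from $=_1$ to $=$ by Theorem~\ref{th2}(vi), and upward closure via Theorem~\ref{th2}(iv) giving $c^0\leq_1 a^0=0$. The extra details you supply (non-emptiness via $1\in D$, the antichain/non-emptiness justification for $(a\vee z)^0=0$) are fine but not a different argument.
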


\begin{proof}
Let $a,b\in D$ and $c\in L$. Then according to (x) of Theorem~\ref{th2} we conclude $(a\wedge b)^{00}=_1a^{00}\wedge b^{00}=1\wedge1=1$ and hence $(a\wedge b)^{00}=1$ according to (vi) of Theorem~\ref{th2} proving $a\wedge b\in D$. If $a\leq c$ then $c^0\leq_1a^0=0$ according to (iv) of Theorem~\ref{th2} and hence $c^0=0$, i.e.\ $c\in D$.
\end{proof}

It is well-known that the set $\mathcal F$ of all filters of $\mathbf L$ forms a complete lattice with respect to inclusion. Using Proposition~\ref{prop1} one can recognize that the set of all $D$-filters of $\mathbf L$ forms a complete sublattice of $\mathcal F$ with bottom element $D$.

$D$-filters were described for distributive pseudocomplemented lattices by M.S.~Rao \cite{Ra}. However, we are going to show that similar result can be stated also for lattices being neither pseudocomplemented nor distributive, but satisfying a weaker condition.

\begin{theorem}\label{prop2}
Let $\mathbf L=(L,\vee,\wedge,0,1)$ be a bounded lattice satisfying the {\rm ACC} and $A$ a non-empty subset of $L$ such that for each $x,y\in\overline A$ the following condition holds:
\[
\text{if }1\in(x^{00}\vee z^{00})\wedge(y^{00}\vee z^{00})\text{ for all }z\in A\text{ then }1\in(x^{00}\wedge y^{00})\vee z^{00}\text{ for all }z\in A.
\]
Then $\overline A$ is a $D$-filter of $\mathbf L$.
\end{theorem}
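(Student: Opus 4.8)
The plan is to check the three defining properties of a $D$-filter for $\overline A$ directly: that $D\subseteq\overline A$ (which at the same time shows $\overline A\neq\emptyset$), that $\overline A$ is an up-set, and that $\overline A$ is closed under $\wedge$. The inclusion $D\subseteq\overline A$ and the up-set property are routine and do not use the hypothesis on $A$; the additional condition imposed on $A$ will be needed only for closure under meets. For the inclusion, I would take $d\in D$, so that $d^{00}=1$, and note that for every $z\in A$ one has $d^{00}\vee z^{00}=1\vee z^{00}=\{1\}$, since $1\vee w=1$ for every $w\in L$ and $z^{00}\neq\emptyset$; hence $1\in d^{00}\vee z^{00}$ for all $z\in A$, which is precisely $d\in\overline A$. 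In particular $1\in D\subseteq\overline A$, so $\overline A$ is non-empty.

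For the up-set property, let $x\in\overline A$ and $z\in L$. From $x\leq x\vee z$ two applications of Theorem~\ref{th2}(iv) give $x^{00}\leq_1(x\vee z)^{00}$. Fixing $w\in A$, the assumption $x\in\overline A$ provides $u\in x^{00}$ and $v\in w^{00}$ with $u\vee v=1$; choosing $u'\in(x\vee z)^{00}$ with $u\leq u'$ we get $u'\vee v\geq u\vee v=1$, so $1\in(x\vee z)^{00}\vee w^{00}$. As $w\in A$ was arbitrary, $x\vee z\in\overline A$.

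The crux is closure under $\wedge$, and this is exactly where the hypothesis on $A$ is used. Let $x,y\in\overline A$ and fix $z\in A$. Then $1\in x^{00}\vee z^{00}$ and $1\in y^{00}\vee z^{00}$, and since $1\wedge1=1$ we obtain $1\in(x^{00}\vee z^{00})\wedge(y^{00}\vee z^{00})$. Because this holds for every $z\in A$, the assumed implication yields $1\in(x^{00}\wedge y^{00})\vee z^{00}$ for all $z\in A$. It then remains to pass from $x^{00}\wedge y^{00}$ to $(x\wedge y)^{00}$: Theorem~\ref{th2}(x) gives $x^{00}\wedge y^{00}\leq_1(x\wedge y)^{00}$, so the same enlargement argument as above turns each witness for $1\in(x^{00}\wedge y^{00})\vee z^{00}$ into one for $1\in(x\wedge y)^{00}\vee z^{00}$, whence $x\wedge y\in\overline A$. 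Together with the previous two parts this shows $\overline A$ is a $D$-filter. I expect the only real subtlety to be the observation that the premise of the condition on $A$ is automatically satisfied once $x,y\in\overline A$ (via $1=1\wedge1$); after that, the reduction through Theorem~\ref{th2}(x) is the mechanical step that closes the argument.
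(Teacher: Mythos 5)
Your proof is correct and takes essentially the same route as the paper's: $D\subseteq\overline A$ directly from $d^{00}=1$, the up-set property via Theorem~\ref{th2}(iv), and closure under $\wedge$ by feeding $1=1\wedge1$ into the hypothesis on $A$ and then invoking Theorem~\ref{th2}(x). If anything, you are more careful than the paper at the final step, where it writes $(a^{00}\wedge b^{00})\vee z^{00}=(a\wedge b)^{00}\vee z^{00}$ as a set equality although (x) only gives $=_1$; your explicit enlargement of witnesses along $\leq_1$ is precisely the justification that step needs.
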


\begin{proof}
Obviously, $D\subseteq\overline A$. Let $a,b\in\overline A$. Then $1\in(a^{00}\vee z^{00})\wedge(b^{00}\vee z^{00})$ for all $z\in A$ and hence $1\in(a^{00}\wedge b^{00})\vee z^{00}=(a\wedge b)^{00}\vee z^{00}$ for all $z\in A$ according to (x) of Theorem~\ref{th2}, i.e.\ $a\wedge b\in\overline A$. Further, if $c\in L$ and $a\leq c$ then $a^{00}\leq_1c^{00}$ according (iv) of Theorem~\ref{th2} and hence $1\in a^{00}\vee z^{00}\leq_1 a^{00}\vee z^{00}$ for all $z\in A$ showing $c\in\overline A$. Altogether, $\overline A$ is a $D$-filter of $\mathbf L$.
\end{proof}

In the next example we show a lattice being neither distributive nor pseudocomplemented, but satisfying the condition from Theorem~\ref{prop2}.

\begin{example}
Consider the non-distributive lattice $\mathbf L$ depicted in Fig.~2:

\vspace*{-4mm}

\begin{center}
\setlength{\unitlength}{1.5mm}
\begin{picture}(40,55)
\put(8,20){\circle*{1.4}}
\put(16,20){\circle*{1.4}}
\put(24,20){\circle*{1.4}}
\put(32,20){\circle*{1.4}}
\put(20,10){\circle*{1.4}}
\put(20,30){\circle*{1.4}}
\put(20,50){\circle*{1.4}}
\put(12,40){\circle*{1.4}}
\put(28,40){\circle*{1.4}}
\put(20,10){\line(-6,5){12}}
\put(20,10){\line(6,5){12}}
\put(20,10){\line(-2,5){4}}
\put(20,10){\line(2,5){4}}
\put(20,30){\line(-6,-5){12}}
\put(20,30){\line(6,-5){12}}
\put(20,30){\line(-2,-5){4}}
\put(20,30){\line(2,-5){4}}
\put(20,30){\line(-4,5){8}}
\put(20,30){\line(4,5){8}}
\put(20,50){\line(-4,-5){8}}
\put(20,50){\line(4,-5){8}}
\put(19.2,6.9){$0$}
\put(5.5,19){$a$}
\put(13.5,19){$b$}
\put(25,19){$c$}
\put(33,19){$d$}
\put(17,29.5){$e$}
\put(9.5,39){$f$}
\put(29,39){$g$}
\put(19.2,51.2){$1$}
\put(17,2){{\rm Fig.~2}}
\put(-10,-2){{\rm Non-distributive non-pseudocomplemented lattice}}
\end{picture}
\end{center}
We have
\[
\begin{array}{r|r|r|r|r|r|r|r|r|r}
          x &   0 &   a &   b &   c &   d &   e &   f &   g &   1 \\
\hline
        x^0 &   1 & bcd & acd & abd & abc &   0 &   0 &   0 &   0 \\
\hline
     x^{00} &   0 &   a &   b &   c &   d &   1 &   1 &   1 &   1 \\
\hline
\overline x & F_e & F_e & F_e & F_e & F_e & F_0 & F_0 & F_0 & F_0
\end{array}
\]
$D=F_e$ and $S=\{0,a,b,c,d,1\}$. Hence $\mathbf L$ is not pseudocomplemented. Further, $F_a$ is a $D$-filter of $\mathbf L$, $\overline{\{a,b\}}=F_e$ is a $D$-filter of $\mathbf L$ in accordance with Theorem~\ref{prop2} and $\{a,b\}^D=\{c,d,e,f,g,1\}$.
\end{example}

\section{Stonean and $D$-Stonean lattices}

The concept of a Stone lattice was introduced by R.~Balbes and A.~Horn \cite{BH}, see also the paper \cite{Sp} by T.P.~Speed. Recall from \cite{BH} that a bounded pseudocomplemented lattice $(L,\vee,\wedge,0,1)$ is called {\em Stone} if
\[
x^*\vee x^{**}=1\text{ and }x^*\vee y^*=(x\wedge y)^*\text{ for all }x,y\in L.
\]
In analogy to this definition we define for our sake the following two concepts.

\begin{definition}\label{def1}
Let $\mathbf L=(L,\vee,\wedge,0,1)$ be a bounded lattice satisfying the {\rm ACC}. Then $\mathbf L$ is called {\em Stonean} if
\begin{equation}\label{equ1}
1\in x^{00}\vee y^{00}\text{ for every }x\in L\text{ and every }y\in x^0
\end{equation}
and {\em $D$-Stonean} if it is both Stonean and if
\begin{equation}\label{equ2}
\text{for all }x,y\in L, x\vee y\in D\text{ is equivalent to }1\in x^{00}\vee y^{00}.
\end{equation}
\end{definition}

Observe that (1) is equivalent to $x^0\subseteq\overline x$ for all $x\in L$, and (2) is equivalent to $\overline x=x^D$ for all $x\in L$. Hence $\mathbf L$ is Stonean if and only if $x^0\subseteq\overline x$ for all $x\in L$, and $\mathbf L$ is $D$-Stonean if $x^0\subseteq\overline x=x^D$ for all $x\in L$. The lattice visualized in Fig.~2 is not Stonean since $b\in a^0$, but $1\notin e=a\vee b=a^{00}\vee b^{00}$.

The following result relates the concept of a Stonean lattice with concepts mentioned before.

\begin{proposition}
Let $\mathbf L=(L,\vee,\wedge,0,1)$ be a bounded lattice satisfying the {\rm ACC} and $a\in L$. Then the following holds:
\begin{enumerate}[{\rm(i)}]
\item If $\mathbf L$ is Stonean and $a^0\leq_2\overline a$ then $\overline{\overline a}=\overline{a^0}$.
\item $\mathbf L$ is Stonean if and only if $\overline{\overline x}\subseteq\overline{x^0}$ for all $x\in L$.
\end{enumerate}
\end{proposition}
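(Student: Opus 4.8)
The plan is to deduce both parts from the abstract Galois-correspondence properties of $A\mapsto\overline A$ recorded in the Remark (antitonicity $A\subseteq B\Rightarrow\overline B\subseteq\overline A$, extensivity $A\subseteq\overline{\overline A}$, and $\overline{\overline{\overline A}}=\overline A$), supplemented by a single monotonicity fact about the relation $R=\{(x,y)\in L^2\mid 1\in x^{00}\vee y^{00}\}$ that induces $\overline{\;\cdot\;}$. The fact I would isolate first is
\[
A\leq_2 B\ \Longrightarrow\ \overline A\subseteq\overline B .
\]
To prove it, take $z\in\overline A$ and an arbitrary $b\in B$; since $A\leq_2 B$ there is $a\in A$ with $a\leq b$, and applying (iv) of Theorem~\ref{th2} twice gives $a^{00}\leq_1 b^{00}$. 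Writing $1=u\vee v$ with $u\in z^{00}$ and $v\in a^{00}$ (possible because $z\in\overline A$) and choosing $v'\in b^{00}$ with $v\leq v'$, we get $u\vee v'=1$, so $1\in z^{00}\vee b^{00}$; as $b$ was arbitrary, $z\in\overline B$.

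For part (ii) the forward direction is immediate: by Definition~\ref{def1} (in the form $x^0\subseteq\overline x$ for all $x$), antitonicity of $\overline{\;\cdot\;}$ gives $\overline{\overline x}\subseteq\overline{x^0}$. For the converse, assuming $\overline{\overline x}\subseteq\overline{x^0}$ for all $x$, I would apply $\overline{\;\cdot\;}$ once more (antitonicity) to obtain $\overline{\overline{x^0}}\subseteq\overline{\overline{\overline x}}=\overline x$, and then combine with extensivity $x^0\subseteq\overline{\overline{x^0}}$ to conclude $x^0\subseteq\overline x$ for all $x$, i.e.\ that $\mathbf L$ is Stonean.

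For part (i), the inclusion $\overline{\overline a}\subseteq\overline{a^0}$ follows exactly as above from $a^0\subseteq\overline a$ (Stoneanness) by antitonicity. The reverse inclusion $\overline{a^0}\subseteq\overline{\overline a}$ is where the extra hypothesis enters: applying the monotonicity fact with $A:=a^0$ and $B:=\overline a$, the assumption $a^0\leq_2\overline a$ yields $\overline{a^0}\subseteq\overline{\overline a}$ directly. Combining the two inclusions gives $\overline{\overline a}=\overline{a^0}$.

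I expect the only genuinely delicate point to be the monotonicity fact, because $x^0$ and $x^{00}$ are antichains rather than single elements, so the passage $a\leq b\Rightarrow a^{00}\leq_1 b^{00}$ must run through $\leq_1$ (via Theorem~\ref{th2}(iv)) and the element-level identity $1=u\vee v$ must be lifted along $v\leq v'$. The $\leq_2$ hypothesis in (i) is precisely what makes this lifting available for \emph{every} member of $\overline a$; it cannot be dropped, since Stoneanness alone only gives $a^0\subseteq\overline a$, hence merely $a^0\leq_1\overline a$, which is too weak to force either inclusion between $\overline{a^0}$ and $\overline{\overline a}$.
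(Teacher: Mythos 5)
Your proof is correct and follows essentially the same route as the paper: your monotonicity lemma $A\leq_2 B\Rightarrow\overline A\subseteq\overline B$ is precisely the paper's inline element-chase for the inclusion $\overline{a^0}\subseteq\overline{\overline a}$ (pick $d\in a^0$ below a given $c\in\overline a$ and lift $1\in b^{00}\vee d^{00}$ along $d^{00}\leq_1 c^{00}$ via Theorem~\ref{th2}(iv)), and both treatments of (ii) are routine Galois-correspondence manipulations drawn from the paper's Remark. One slip in your closing aside only: Stoneanness alone \emph{does} force one of the two inclusions, namely $\overline{\overline a}\subseteq\overline{a^0}$ (as you yourself use at the start of (i) and as part (ii) asserts), so the hypothesis $a^0\leq_2\overline a$ is needed only for the reverse inclusion, not for ``either inclusion.''
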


\begin{proof}
\
\begin{enumerate}[(i)]
\item Assume that $\mathbf L$ is Stonean and $a^0\leq_2\overline a$. Then $a^0\subseteq\overline a$ and hence $\overline{\overline a}\subseteq\overline{a^0}$. Now let $b\in\overline{a^0}$ and $c\in\overline a$. Since $a^0\leq_2\overline a$ there exists some $d\in a^0$ with $d\leq c$. We conclude $1\in b^{00}\vee d^{00}\leq_1b^{00}\vee c^{00}$ and hence $1\in b^{00}\vee c^{00}$. This shows $b\in\overline{\overline a}$ and hence $\overline{a^0}\subseteq\overline{\overline a}$. Together we obtain $\overline{\overline a}=\overline{a^0}$.
\item According to the observation after Definition~\ref{def1}, $\mathbf L$ is Stonean if and only if $x^0\subseteq\overline x$ for all $x\in L$. Now for every $x\in L$ the inclusion $x^0\subseteq\overline x$ is equivalent to $\overline{\overline x}\subseteq\overline{x^0}$.
\end{enumerate}
\end{proof}

The next result is elementary but it will be used in the sequel.

\begin{lemma}
Let $(L,\vee,\wedge,0,1)$ be a $D$-Stonean lattice and $a\in L$. Then $a\vee a^0\subseteq D$.
\end{lemma}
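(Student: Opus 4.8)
The plan is to show that every element of the set $a\vee a^0$ is dense by simply chaining together the two defining properties of a $D$-Stonean lattice. Recall the conventions fixed at the start of the paper: $a\vee a^0$ denotes the elementwise join $\{a\vee y\mid y\in a^0\}$ (with the singleton $\{a\}$ identified with $a$), so it suffices to prove $a\vee y\in D$ for each $y\in a^0$.

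First I would fix an arbitrary $y\in a^0$ and invoke the Stonean hypothesis~\eqref{equ1}. Taking $x:=a$ in that condition, the requirement $y\in x^0=a^0$ is satisfied by the choice of $y$, so \eqref{equ1} yields immediately $1\in a^{00}\vee y^{00}$.

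Next I would apply the second defining condition~\eqref{equ2} of a $D$-Stonean lattice, which asserts that $x\vee y\in D$ is equivalent to $1\in x^{00}\vee y^{00}$ for all $x,y\in L$. With $x:=a$ and the same $y\in a^0\subseteq L$, the relation $1\in a^{00}\vee y^{00}$ established in the previous step is precisely the right-hand side of this equivalence, and hence $a\vee y\in D$. Since $y\in a^0$ was arbitrary, this gives $a\vee a^0\subseteq D$, as required.

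I expect no genuine obstacle here: the argument is a two-line unwinding of the definitions, which is presumably why the authors flag the result as elementary. The only point that warrants a moment's care is the notational one, namely keeping in mind that $a\vee a^0$ is the set $\{a\vee y\mid y\in a^0\}$ rather than a single element, so that the lemma is really a statement about the join of $a$ with every maximal element of the annihilator of $a$; once this is clear, the Stonean property feeds directly into the $D$-Stonean equivalence and the conclusion follows.
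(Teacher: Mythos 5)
Your proof is correct and matches the paper's own argument essentially verbatim: for $b\in a^0$, the Stonean condition (1) gives $1\in a^{00}\vee b^{00}$, and the $D$-Stonean equivalence (2) then yields $a\vee b\in D$. Your extra remark clarifying that $a\vee a^0$ denotes the elementwise join $\{a\vee y\mid y\in a^0\}$ is a sound reading of the paper's conventions and does not change the argument.
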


\begin{proof}
If $b\in a^0$ then $1\in a^{00}\vee b^{00}$ according to (\ref{equ1}) which is equivalent to $a\vee b\in D$ because of (\ref{equ2}).
\end{proof}

\begin{example}
\
\begin{enumerate}[{\rm(i)}]
\item The lattice visualized in Fig.~2 satisfies neither {\rm(\ref{equ1})} nor {\rm(\ref{equ2})} since $b\in a^0$ and $a\vee b=e\in D$, but $1\notin e=a\vee b=a^{00}\vee b^{00}$.
\item Consider the non-distributive lattice $\mathbf L$ depicted in Fig.~3:

\vspace*{-4mm}

\begin{center}
\setlength{\unitlength}{1.5mm}
\begin{picture}(55,45)
\put(10,20){\circle*{1.4}}
\put(10,30){\circle*{1.4}}
\put(20,20){\circle*{1.4}}
\put(30,10){\circle*{1.4}}
\put(30,30){\circle*{1.4}}
\put(30,40){\circle*{1.4}}
\put(40,20){\circle*{1.4}}
\put(50,20){\circle*{1.4}}
\put(50,30){\circle*{1.4}}
\put(30,10){\line(-2,1){20}}
\put(30,10){\line(2,1){20}}
\put(30,10){\line(-1,1){10}}
\put(30,10){\line(1,1){10}}
\put(10,20){\line(0,1){10}}
\put(10,20){\line(2,1){20}}
\put(20,20){\line(1,1){10}}
\put(40,20){\line(-1,1){10}}
\put(50,20){\line(0,1){10}}
\put(50,20){\line(-2,1){20}}
\put(30,30){\line(0,1){10}}
\put(10,30){\line(2,1){20}}
\put(50,30){\line(-2,1){20}}
\put(29.2,6.9){$0$}
\put(7.5,19){$a$}
\put(17.5,19){$b$}
\put(41,19){$c$}
\put(51,19){$d$}
\put(7.5,29){$e$}
\put(30.8,30.5){$f$}
\put(51,29){$g$}
\put(29.2,41.2){$1$}
\put(26,1){{\rm Fig.~3}}
\put(-8,-3){{\rm Non-distributive non-pseudocomplemented non-Stonean lattice}}
\end{picture}
\end{center}
We have
\[
\begin{array}{r|r|r|r|r|r|r|r|r|r}
          x &   0 &      a &      b &      c &      d &      e & f &      g &   1 \\
\hline
        x^0 &   1 &    bcg &    ceg &    beg &    bce &    bcg & 0 &    bce &   0 \\
\hline
     x^{00} &   0 &      e &      b &      c &      g &      e & 1 &      g &   1 \\
\hline
\overline x & F_f & bcdfg1 & adefg1 & adefg1 & abcef1 & bcdfg1 & L & abcef1 & F_0
\end{array}
\]
$D=F_f$ and $S=\{0,b,c,e,g,1\}$. Hence $\mathbf L$ is not pseudocomplemented. Moreover, $\mathbf L$ satisfies neither {\rm(1)} nor {\rm(2)} since $c\in b^0$ and $b\vee c=f\in D$, but $1\notin f=b\vee c=b^{00}\vee c^{00}$. Therefore $\mathbf L$ is not Stonean.
\end{enumerate}
\end{example}

\begin{theorem}
Conditions {\rm(\ref{equ1})} and {\rm(\ref{equ2})} of Definition~\ref{def1} are independent.
\end{theorem}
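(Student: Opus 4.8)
The statement asserts that neither of the two conditions entering the definition of a $D$-Stonean lattice forces the other, so the plan is to exhibit two bounded lattices satisfying the ACC: one for which (\ref{equ1}) holds but (\ref{equ2}) fails, and one for which (\ref{equ2}) holds but (\ref{equ1}) fails. Since every finite lattice satisfies the ACC, I would work with finite examples and verify the two conditions directly from their annihilator tables, exactly in the style already used for Figures~1--3. Recall from the remarks following Definition~\ref{def1} that (\ref{equ1}) says that $\mathbf L$ is Stonean, i.e.\ $x^0\subseteq\overline x$ for all $x$, while (\ref{equ2}) says $\overline x=x^D$ for all $x$.

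For the implication (\ref{equ1})$\,\not\Rightarrow\,$(\ref{equ2}) I would reuse the lattice of Fig.~1. First one checks from the table that it is Stonean, i.e.\ that for every $x$ and every $y\in x^0$ the set $x^{00}\vee y^{00}$ contains $1$; for instance $a^0=\{f,g\}$ gives $a^{00}\vee f^{00}=a\vee\{f,g\}=\{1\}$ and $a^{00}\vee g^{00}=a\vee g=1$, and the remaining cases are analogous. Since $D=F_1=\{1\}$ here, (\ref{equ2}) reduces to the equivalence ``$x\vee y=1$ if and only if $1\in x^{00}\vee y^{00}$'', and this fails for the pair $a,d$: indeed $a\vee d=e\neq1$, so $a\vee d\notin D$, yet $a^{00}\vee d^{00}=a\vee g=1$, so $1\in a^{00}\vee d^{00}$. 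Hence Fig.~1 satisfies (\ref{equ1}) but violates (\ref{equ2}).

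For the implication (\ref{equ2})$\,\not\Rightarrow\,$(\ref{equ1}) the guiding observation is that if \emph{every} element of $\mathbf L$ is sharp and $D=\{1\}$, then (\ref{equ2}) holds automatically: sharpness gives $z^{00}\vee w^{00}=\{z\}\vee\{w\}=\{z\vee w\}$ for all $z,w$, so $1\in z^{00}\vee w^{00}$ is equivalent to $z\vee w=1$, which is exactly $z\vee w\in\{1\}=D$. Thus it suffices to construct a lattice in which all elements are sharp, $D=\{1\}$, and some element admits a maximal disjoint partner whose join with it misses the top (the latter feature makes (\ref{equ1}) fail). A convenient candidate is the six-element lattice $\mathbf L=\{0,a,b,c,g,1\}$ with atoms $a,b,g$, with $c$ covering $a$ and $b$ (so $c=a\vee b$), and with $c$ and $g$ the two coatoms (so $c\vee g=1$ and $g$ incomparable to each of $a,b,c$); one checks at once that this is a lattice. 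A short computation gives $a^0=\{b,g\}$, $b^0=\{a,g\}$, $c^0=g$, $g^0=c$, and hence $a^{00}=a$, $b^{00}=b$, $c^{00}=c$, $g^{00}=g$, so that every element is sharp and $D=\{1\}$. By the observation, (\ref{equ2}) holds, whereas (\ref{equ1}) fails at $x=a$ and $y=b\in a^0$, because $a^{00}\vee b^{00}=a\vee b=c$ does not contain $1$.

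The routine part is the tabulation of $x^0$ and $x^{00}$ in each lattice; the only genuinely delicate point is the design of the second example. The difficulty is that any non-sharp element tends to produce, after the passage $z\mapsto z^{00}$, a new pair whose sharpened join reaches $1$ while the plain join does not, and this is precisely what breaks (\ref{equ2}) in Fig.~1. Forcing \emph{all} elements to be sharp removes this danger and collapses (\ref{equ2}) to a tautology, yet a sharp lattice can still fail to be Stonean, as the adjoined disjoint atom $g$ above shows; reconciling these two requirements is the heart of the construction.
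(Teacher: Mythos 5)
Your proposal is correct, and while its first half coincides with the paper's proof, its second half takes a genuinely different route. For (\ref{equ1})$\,\not\Rightarrow\,$(\ref{equ2}) both you and the paper use the lattice of Fig.~1; you refute the implication $1\in x^{00}\vee y^{00}\Rightarrow x\vee y\in D$ via the pair $(a,d)$ (indeed $a^{00}\vee d^{00}=a\vee g=1$ while $a\vee d=e\notin D$), whereas the paper refutes the converse implication via the pair $(e,g)$ ($e\vee g=1\in D$ but $e^{00}\vee g^{00}=g\not\ni1$); either witness breaks the equivalence, so both are fine. For (\ref{equ2})$\,\not\Rightarrow\,$(\ref{equ1}) the paper exhibits the twelve-element lattice of Fig.~4 and verifies (\ref{equ2}) from its annihilator tables, while you isolate a structural design principle: if every element is sharp then $x^{00}\vee y^{00}=\{x\vee y\}$, and since the paper notes $D\cap S=1$, all-sharpness even forces $D=\{1\}$ automatically (you verified $D=\{1\}$ directly, which also works), so (\ref{equ2}) degenerates to the tautology $x\vee y=1\Leftrightarrow x\vee y=1$. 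Your six-element realization checks out: $a^0=\{b,g\}$, $b^0=\{a,g\}$, $c^0=g$, $g^0=c$, hence $x^{00}=x$ for all $x$, the poset you describe is indeed a lattice, and (\ref{equ1}) fails at $b\in a^0$ since $a^{00}\vee b^{00}=\{c\}\not\ni1$; moreover your example is, like Fig.~4, non-distributive and non-pseudocomplemented ($a^0$ is not a singleton), so it fits the paper's setting. As for what each approach buys: yours is smaller (six elements versus twelve) and replaces the case-by-case verification of (\ref{equ2}) with a one-line argument; the paper's Fig.~4 shows in addition that (\ref{equ2}) can hold in the presence of non-sharp elements (there $a,c,e,f,i\notin S$), i.e., your sufficient criterion is convenient but not necessary for (\ref{equ2}).
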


\begin{proof}
The lattice visualized in Fig.~1 satisfies (\ref{equ1}), but not (\ref{equ2}) since $e\vee g=1\in D$, but $1\notin g=g\vee g=e^{00}\vee g^{00}$. Hence it is Stonean, but not $D$-Stonean. Now consider the non-distributive lattice $\mathbf L$ depicted in Fig.~4:

\vspace*{-4mm}

\begin{center}
\setlength{\unitlength}{1.5mm}
\begin{picture}(55,55)
\put(10,30){\circle*{1.4}}
\put(20,20){\circle*{1.4}}
\put(20,30){\circle*{1.4}}
\put(20,40){\circle*{1.4}}
\put(30,10){\circle*{1.4}}
\put(30,20){\circle*{1.4}}
\put(30,40){\circle*{1.4}}
\put(30,50){\circle*{1.4}}
\put(40,20){\circle*{1.4}}
\put(40,30){\circle*{1.4}}
\put(40,40){\circle*{1.4}}
\put(50,30){\circle*{1.4}}
\put(30,10){\line(-1,1){20}}
\put(30,10){\line(1,1){20}}
\put(30,50){\line(-1,-1){20}}
\put(30,50){\line(1,-1){20}}
\put(20,20){\line(0,1){20}}
\put(40,20){\line(0,1){20}}
\put(30,10){\line(0,1){10}}
\put(30,40){\line(0,1){10}}
\put(30,20){\line(-1,1){10}}
\put(30,20){\line(1,1){10}}
\put(30,40){\line(-1,-1){10}}
\put(30,40){\line(1,-1){10}}
\put(29.2,6.9){$0$}
\put(17.5,19){$a$}
\put(31,19){$b$}
\put(41,19){$c$}
\put(7.5,29){$d$}
\put(17.5,29){$e$}
\put(41,29){$f$}
\put(51,29){$g$}
\put(17.5,40){$h$}
\put(31,40){$i$}
\put(41,40){$j$}
\put(29.2,51.2){$1$}
\put(26,1){{\rm Fig.~4}}
\put(-8,-3){{\rm Non-distributive non-pseudocomplemented non-Stonean lattice}}
\end{picture}
\end{center}
We have
\[
\begin{array}{r|r|r|r|r|r|r|r|r|r|r|r|r}
          x &   0 &   a &   b &   c &   d &   e &   f &   g &   h &   i &   j & 1 \\
\hline
        x^0 &   1 &   j &  dg &   h &   j &   g &   d &   h &   g &   0 &   d & 0 \\
\hline
     x^{00} &   0 &   d &   b &   g &   d &   h &   j &   g &   h &   1 &   j & 1 \\
\hline
\overline x & F_i & F_c & F_i & F_a & F_c & F_c & F_a & F_a & F_c & F_0 & F_a & F_0
\end{array}
\]
$D=F_i$ and $S=\{0,b,d,g,h,j,1\}$. Hence $\mathbf L$ is not pseudocomplemented. The lattice $\mathbf L$ satisfies (\ref{equ2}), but not (\ref{equ1}) since $d\in b^0$, but $1\notin h=b\vee d=b^{00}\vee d^{00}$. Therefore it is not Stonean.
\end{proof}

How the concept of a Stonean lattice is related with its filters is shown in the following theorem.

\begin{theorem}\label{thm9}
Let $\mathbf L=(L,\vee,\wedge,0,1)$ be a Stonean lattice satisfying the {\rm ACC}. Then {\rm(\ref{equ2})} is equivalent to any single of the following statements:
\begin{enumerate}[{\rm(i)}]
\item $\overline F=F^D$ for all filters $F$ of $\mathbf L$,
\item $\overline x=x^D$ for all $x\in L$,
\item for every two filters $F,G$ of $\mathbf L$, $F\cap G\subseteq D$ is equivalent to $F\subseteq\overline G$.
\end{enumerate}
\end{theorem}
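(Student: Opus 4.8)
My plan is to prove the cycle $(\ref{equ2})\Rightarrow(\mathrm{i})\Rightarrow(\mathrm{ii})\Rightarrow(\ref{equ2})$, establishing the equivalence of $(\ref{equ2})$, (i) and (ii), and then to treat $(\ref{equ2})\Leftrightarrow(\mathrm{iii})$ separately. The organizing principle is that $(\ref{equ2})$ says exactly that, for every ordered pair $x,y\in L$, the two predicates ``$x\vee y\in D$'' and ``$1\in x^{00}\vee y^{00}$'' are interchangeable. Since $z\in A^D$ and $z\in\overline A$ are defined by quantifying precisely these two predicates over $y\in A$, the assumption $(\ref{equ2})$ forces $\overline A=A^D$ for \emph{every} subset $A\subseteq L$. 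Restricting to an arbitrary filter $A=F$ yields $(\ref{equ2})\Rightarrow(\mathrm i)$ immediately, and the implication $(\mathrm{ii})\Rightarrow(\ref{equ2})$ is nothing but the observation recorded just after Definition~\ref{def1}.

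The one genuinely new passage in the cycle is $(\mathrm i)\Rightarrow(\mathrm{ii})$, which I would route through principal filters. First I would record the two set identities $\overline{F_x}=\overline x$ and $F_x^D=x^D$ for every $x\in L$. In both cases the inclusion ``$\subseteq$'' is immediate from $x\in F_x$, while the reverse inclusion uses, on the one hand, that $x\le w$ implies $x^{00}\leq_1 w^{00}$ (apply Theorem~\ref{th2}(iv) twice), so that a witness $p\vee q=1$ with $q\in x^{00}$ persists after enlarging $q$ to some $q'\in w^{00}$, and on the other hand that $D$ is an up-set, which holds because $D$ is a filter by Proposition~\ref{prop1}. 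Granting (i) for the filter $F_x$ then gives $\overline x=\overline{F_x}=F_x^D=x^D$, which is exactly (ii).

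For $(\ref{equ2})\Leftrightarrow(\mathrm{iii})$ I would proceed as follows. Under $(\ref{equ2})$ we have $\overline G=G^D$, and the biconditional $F\cap G\subseteq D\Leftrightarrow F\subseteq\overline G$ becomes a pure filter computation: if $F\subseteq G^D$ then every $z\in F\cap G$ satisfies $z=z\vee z\in D$, and conversely if $F\cap G\subseteq D$ then for $x\in F$ and $y\in G$ the join $x\vee y$ lies in both $F$ and $G$, hence in $D$, so $x\in G^D$. For the reverse direction I would instantiate (iii) at $F=F_x$ and $G=F_y$. Here $F_x\cap F_y=F_{x\vee y}$, and since $D$ is up-closed, $F_{x\vee y}\subseteq D$ is equivalent to $x\vee y\in D$; on the other side, using $\overline{F_x}=\overline x$, $\overline{F_y}=\overline y$ and the Galois symmetry $A\subseteq\overline B\Leftrightarrow B\subseteq\overline A$ from the Remark following Lemma~\ref{lem4}, the inclusion $F_x\subseteq\overline{F_y}$ is equivalent to $y\in\overline x$, that is, to $1\in x^{00}\vee y^{00}$. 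Thus (iii) for $F_x,F_y$ is precisely $(\ref{equ2})$ for $x,y$.

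I do not expect a single hard estimate; the main obstacle is bookkeeping, namely pinning down the principal-filter reductions correctly. The delicate points are the identity $\overline{F_x}=\overline x$ (which rests on the monotonicity $x^{00}\leq_1 w^{00}$ extracted from Theorem~\ref{th2}(iv)) and the handling of the quantifier directions in (iii) through the Galois correspondence rather than by hand. Once these facts are in place every implication collapses to the single pointwise equivalence carried by $(\ref{equ2})$; it is worth noting that these arguments seem to invoke only the ACC and the filter property of $D$, so the Stonean hypothesis is present as the ambient setting but may not be essential to the equivalences themselves.
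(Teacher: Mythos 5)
Your proposal is correct and essentially coincides with the paper's proof, which establishes the cycle (\ref{equ2})~$\Rightarrow$~(i)~$\Rightarrow$~(ii)~$\Rightarrow$~(iii)~$\Rightarrow$~(\ref{equ2}) using exactly your ingredients: the pointwise interchangeability of ``$x\vee y\in D$'' and ``$1\in x^{00}\vee y^{00}$'' for (\ref{equ2})~$\Rightarrow$~(i), the principal-filter identities $\overline{F_x}=\overline x$ and $F_x^D=x^D$ for (i)~$\Rightarrow$~(ii) (the paper asserts these without proof; your justification via Theorem~\ref{th2}(iv) and the up-closedness of $D$ is the right one), the observation $F\cap G=F\vee G$ implicit in your $z=z\vee z$ step, and the instantiation at principal filters with $F_x\cap F_y=F_{x\vee y}$ for the direction involving (iii). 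Your closing remark is also borne out by the paper: its proof of this theorem never invokes condition (\ref{equ1}), only the ACC and the fact that $D$ is a filter.
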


\begin{proof}
$\text{}$ \\
(\ref{equ2}) $\Rightarrow$ (i): \\
This follows from
\begin{align*}
\overline F & =\{x\in L\mid1\in x^{00}\vee y^{00}\text{ for all }y\in F\}, \\
        F^D & =\{x\in L\mid x\vee y\in D\text{ for all }y\in F\}
\end{align*}
for all filters $F$ of $\mathbf L$. \\
(i) $\Rightarrow$ (ii): \\
For all $x\in L$ we have $\overline x=\overline{F_x}=F_x^D=x^D$. \\
(ii) $\Rightarrow$ (iii): \\
Let $F,G$ be filters of $\mathbf L$. If $a\in F\cap G$ then $a=a\vee a\in F\vee G$. If, conversely, $a\in F\vee G$ then there exists some $b\in F$ and some $c\in G$ with $b\vee c=a$ and hence $a\in F\cap G$. This shows $F\cap G=F\vee G$. Now the following are equivalent:
\begin{align*}
F\cap G & \subseteq D, \\
F\vee G & \subseteq D, \\
x\vee y & \in D\text{ for all }x\in F\text{ and all }y\in G, \\
      y & \in x^D\text{ for all }x\in F\text{ and all }y\in G, \\
      y & \in\overline x\text{ for all }x\in F\text{ and all }y\in G, \\
      1 & \in x^{00}\vee y^{00}\text{ for all }x\in F\text{ and all }y\in G, \\
      F & \subseteq\overline G.
\end{align*}
(iii) $\Rightarrow$ (\ref{equ2}): \\
For all $x,y\in L$ the following are equivalent: $x\vee y\in D$; $F_{x\vee y}\subseteq D$; $F_x\cap F_y\subseteq D$; $F_x\subseteq\overline{F_y}$; $x\in\overline y$; $1\in x^{00}\vee y^{00}$.
\end{proof}

The results of the previous theorem can be checked in the following example.

\begin{example}\label{exam2b}
Consider the non-distributive lattice $\mathbf L$ depicted in Fig.~5:

\vspace*{-4mm}

\begin{center}
\setlength{\unitlength}{1.5mm}
\begin{picture}(45,45)
\put(10,20){\circle*{1.4}}
\put(10,30){\circle*{1.4}}
\put(20,10){\circle*{1.4}}
\put(20,25){\circle*{1.4}}
\put(20,40){\circle*{1.4}}
\put(35,25){\circle*{1.4}}
\put(20,10){\line(-1,1){10}}
\put(20,10){\line(0,1){30}}
\put(20,10){\line(1,1){15}}
\put(10,20){\line(0,1){10}}
\put(20,40){\line(-1,-1){10}}
\put(20,40){\line(1,-1){15}}
\put(19.2,6.9){$0$}
\put(7.5,19){$a$}
\put(21,24){$b$}
\put(36,24){$c$}
\put(7.5,29){$d$}
\put(19.2,41.2){$1$}
\put(17,2){{\rm Fig.~5}}
\put(-14,-2){{\rm Non-distributive non-pseudocomplemented $D$-Stonean lattice}}
\end{picture}
\end{center}
We have
\[
\begin{array}{r|r|r|r|r|r|r}
                                   x &   0 &   a &    b &    c &   d &   1 \\
\hline
                                 x^0 &   1 &  bc &   cd &   bd &  bc &   0 \\
\hline
                              x^{00} &   0 &   d &    b &    c &   d &   1 \\
\hline
\overline{F_x}=F_x^D=\overline x=x^D & F_1 & bc1 & acd1 & abd1 & bc1 & F_0
\end{array}
\]
$D=F_1$ and $S=\{0,b,c,d,1\}$. Hence $\mathbf L$ is not pseudocomplemented, but it is $D$-Stonean. Moreover, for $x,y\in L$ both $F_x\cap F_y\subseteq D$ and $F_x\subseteq\overline{F_y}$ are equivalent to
\[
1\in\{x,y\}\text{ or }(x,y\in\{a,b,c,d\}\text{ and }x\neq y\text{ and }\{x,y\}\neq\{a,d\})
\]
in accordance with Theorem~\ref{thm9}.
\end{example}

\section{Coherent and closed filters}

We are going to introduce the so-called coherent filters. For this, let us define the operator $c$ on filters of $\mathbf L$ as follows:
\[
c(F):=\{x\in L\mid\overline x\wedge F=L\}.
\]
Now we define

\begin{definition}
A {\em filter} $F$ of a bounded lattice $\mathbf L=(L,\vee,\wedge,0,1)$ satisfying the {\rm ACC} is called {\em coherent} if $c(F)=F$.
\end{definition}

One can easily show that if $F$ and $G$ are filters of $\mathbf L$ with $F\subseteq G$ then $c(F)\subseteq c(G)$. Hence $c(F\cap G)\subseteq c(F)\cap c(G)$ for all filters $F,G$ of $\mathbf L$.

\begin{example}
Consider the lattice from Fig.~2. Then we have
\[
\begin{array}{r|r|r|r|r|r|r|r|r|r}
     x &   0 &   a &   b &   c &   d &   e &   f &   g &   1 \\
\hline
c(F_x) & F_0 & F_e & F_e & F_e & F_e & F_e & F_e & F_e & F_e
\end{array}
\]
and hence $F_0$ and $F_e$ are the only coherent filters. The filters $F_a$ and $F_f$ of the lattice depicted in Fig.~3 are coherent, but the filter $F_b$ is not since $b\in F_b\setminus c(F_b)$.
\end{example}

\begin{lemma}\label{lem2}
If $\mathbf L=(L,\vee,\wedge,0,1)$ is a bounded lattice satisfying the {\rm ACC} and $F$ a filter of $\mathbf L$ satisfying $\overline x\cup F=L$ for all $x\in F$ then $F\subseteq c(F)$.
\end{lemma}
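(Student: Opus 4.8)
The plan is to fix an arbitrary element $a\in F$ and verify directly that $a\in c(F)$, i.e.\ that $\overline a\wedge F=L$. The inclusion $\overline a\wedge F\subseteq L$ is trivial, so the entire task reduces to proving $L\subseteq\overline a\wedge F$, and this is precisely where the hypothesis $\overline a\cup F=L$ will be used. The conceptual point to keep in mind is the shift from the set-theoretic covering $\overline a\cup F=L$ furnished by the hypothesis to the meet-set identity $\overline a\wedge F=L$ demanded by the definition of $c$.

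First I would record two facts valid for every filter and every element, which together form the bridge between the two conditions: namely $1\in F$ and $1\in\overline a$. The former is immediate, since $F$ is non-empty and, being a filter, is closed under joins with arbitrary elements of $L$, so for any $t\in F$ we have $t\vee1=1\in F$. For the latter I would apply the antitone property of the operator $A\mapsto\overline A$ (recorded in the Remark following Lemma~\ref{lem4}) to the inclusion $\{a\}\subseteq L$, which yields $\overline L\subseteq\overline a$; since $\overline L=D$ by Lemma~\ref{lem4} and $1\in D$, we conclude $1\in\overline a$. (This is cleaner than computing $1^{00}$ directly.)

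Next I would take an arbitrary $w\in L$ and invoke the hypothesis $\overline a\cup F=L$ to distinguish two cases. If $w\in\overline a$, then $w=w\wedge1$ exhibits $w$ as the meet of an element of $\overline a$ with $1\in F$, so $w\in\overline a\wedge F$. If $w\in F$, then symmetrically $w=1\wedge w$ writes $w$ as the meet of $1\in\overline a$ with $w\in F$, so again $w\in\overline a\wedge F$. In either case $w\in\overline a\wedge F$, whence $L\subseteq\overline a\wedge F$ and therefore $\overline a\wedge F=L$, that is, $a\in c(F)$. Since $a\in F$ was arbitrary, this gives $F\subseteq c(F)$. The argument presents no genuine obstacle; its only subtlety is recognizing that the presence of $1$ in both $\overline a$ and $F$ is exactly what converts the union cover into the required meet-set identity.
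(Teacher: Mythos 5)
Your proof is correct and is essentially identical to the paper's: both arguments note that $1\in F\cap\overline a$ and then use the hypothesis $\overline a\cup F=L$ to write an arbitrary $w\in L$ as $w\wedge 1$ or $1\wedge w$, exhibiting it as an element of $\overline a\wedge F$. Your more careful justification of $1\in\overline a$ (via antitonicity and Lemma~\ref{lem4}) merely fills in a step the paper leaves implicit, so there is no substantive difference.
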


\begin{proof}
Let $a\in F$ and $b\in L$. Then $1\in F\cap\overline a$ and either $b\in\overline a$ and hence $b=b\wedge1\in\overline a\wedge F$ or $b\in F$ and hence $b=1\wedge b\in\overline a\wedge F$. This shows $a\in c(F)$ and therefore $F\subseteq c(F)$.
\end{proof}

Since $\overline x=L$ for all $x\in D$ we have $F\subseteq c(F)$ for all filters $F$ of $\mathbf L$ being contained in $D$.

It is easy to see that if a lattice $\mathbf L$ is distributive then $c(F)$ is closed under $\wedge$ for every filter $F$ of $\mathbf L$. However, we can show that this holds for not necessarily distributive lattices under a weak condition.

\begin{proposition}\label{prop3}
Let $\mathbf L=(L,\vee,\wedge,0,1)$ be a bounded lattice satisfying the {\rm ACC} and $F$ a proper filter of $\mathbf L$ and $a,b\in L$. Then the following holds:
\begin{enumerate}[{\rm(i)}]
\item If $c(F)$ is closed with respect to $\wedge$ then $c(F)$ is a $D$-filter of $F$,
\item the inclusion $\overline a\wedge\overline b\subseteq\overline{a\wedge b}$ holds if and only if $x,y\in L$ and $1\in(x^{00}\vee a^{00})\wedge(y^{00}\vee b^{00})$ together imply $1\in(x\wedge y)^{00}\vee(a\wedge b)^{00}$,
\item if $\overline x\wedge\overline y\subseteq\overline{x\wedge y}$ for all $x,y\in c(F)$ then $c(F)$ is closed with respect to $\wedge$ and hence a $D$-filter of $\mathbf L$.
\end{enumerate}
\end{proposition}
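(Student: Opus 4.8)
The plan is to unwind the definition $c(F)=\{x\in L\mid\overline x\wedge F=L\}$ and to lean on three elementary facts about $\overline{\phantom x}$ and the subset-meet. First, $1\in\overline x$ for every $x$, since $1^{00}=1$ and every $x^{00}$ is non-empty, so $1\in 1^{00}\vee x^{00}$. Second, $p\leq q$ forces $\overline p\subseteq\overline q$: from $p\leq q$ we get $q^0\leq_1 p^0$ and hence $p^{00}\leq_1 q^{00}$ by two applications of Theorem~\ref{th2}(iv), so a witness $1=s\vee t$ with $s\in w^{00}$ and $t\in p^{00}$ transports along some $t\leq t'\in q^{00}$ to $1=s\vee t'\in w^{00}\vee q^{00}$. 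Third, the subset-meet is associative (by associativity of $\wedge$ in $L$) and $\overline x\wedge L=L$, because $1\in\overline x$ lets us write any $w=1\wedge w\in\overline x\wedge L$. With these in hand the three parts are essentially bookkeeping; the only place where genuine lattice content enters is verifying the hypothesis $\overline x\wedge\overline y\subseteq\overline{x\wedge y}$, which is precisely what (ii) is designed to make checkable.

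For (i) I would first record $D\subseteq c(F)$: if $x\in D$ then $x^{00}=1$, so $\overline x=L$, and since $1\in F$ (as $F$ is a non-empty filter) we have $L\wedge F=L$, whence $\overline x\wedge F=L$ and $x\in c(F)$; in particular $1\in c(F)$, so $c(F)$ is non-empty. Next I show $c(F)$ is upward closed: if $x\in c(F)$ and $x\leq z$ then $\overline x\subseteq\overline z$ by monotonicity, so $\overline z\wedge F\supseteq\overline x\wedge F=L$, giving $z\in c(F)$. Combined with the standing assumption that $c(F)$ is closed under $\wedge$, this makes $c(F)$ a filter, and since it contains $D$ it is a $D$-filter of $\mathbf L$.

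For (ii) I would translate both sides into element conditions. Since $1=p\wedge q$ in a bounded lattice forces $p=q=1$, membership $1\in P\wedge Q$ of a subset-meet is equivalent to $1\in P$ and $1\in Q$; thus $1\in(x^{00}\vee a^{00})\wedge(y^{00}\vee b^{00})$ says exactly $x\in\overline a$ and $y\in\overline b$, while $1\in(x\wedge y)^{00}\vee(a\wedge b)^{00}$ says exactly $x\wedge y\in\overline{a\wedge b}$. Hence the displayed implication, quantified over all $x,y\in L$, is literally the statement that each element $x\wedge y$ of $\overline a\wedge\overline b$ lies in $\overline{a\wedge b}$, i.e.\ the inclusion $\overline a\wedge\overline b\subseteq\overline{a\wedge b}$, and the asserted equivalence is immediate.

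For (iii), given $x,y\in c(F)$ we have $\overline x\wedge F=L=\overline y\wedge F$, and I would chain the subset-meet identities using the hypothesis $\overline x\wedge\overline y\subseteq\overline{x\wedge y}$ together with associativity and $\overline x\wedge L=L$:
\[
\overline{x\wedge y}\wedge F\supseteq(\overline x\wedge\overline y)\wedge F=\overline x\wedge(\overline y\wedge F)=\overline x\wedge L=L,
\]
where the first step also uses monotonicity of $-\wedge F$. Hence $\overline{x\wedge y}\wedge F=L$, i.e.\ $x\wedge y\in c(F)$, so $c(F)$ is closed under $\wedge$, and part (i) then yields that $c(F)$ is a $D$-filter of $\mathbf L$. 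The main obstacle here is purely administrative: keeping the two subset operations $\vee$ and $\wedge$ straight and confirming associativity of the subset-meet, since no structural property of $\mathbf L$ beyond the hypothesis isolated in (ii) is required.
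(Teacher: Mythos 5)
Your proposal is correct and takes essentially the same route as the paper: $D\subseteq c(F)$ plus upward closure of $c(F)$ via monotonicity of $x\mapsto\overline x$ for (i), rewriting both sides through the observation that $1\in P\wedge Q$ iff $1\in P$ and $1\in Q$ for (ii), and a chain of subset-meet computations for (iii). The only cosmetic deviation is in (iii), where the paper computes $L=L\wedge L=(\overline x\wedge F)\wedge(\overline y\wedge F)=(\overline x\wedge\overline y)\wedge(F\wedge F)$ using $F\wedge F=F$, whereas you use associativity together with $\overline x\wedge L=L$ via $1\in\overline x$; otherwise you merely make explicit (correctly) the $\leq_1$-transport and antichain facts that the paper leaves implicit.
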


\begin{proof}
\
\begin{enumerate}[(i)]
\item If $a\in D$ then $a^{00}=1$ and hence $\overline a=L$ whence $L=L\wedge1\subseteq L\wedge F=\overline a\wedge F\subseteq L$, i.e.\ $\overline a\wedge F=L$ showing $a\in c(F)$. Therefore $D\subseteq c(F)$. If $b\in c(F)$, $c\in L$ and $b\leq c$ then $b^{00}\leq_1c^{00}$ and hence $L=\overline b\wedge F\subseteq\overline c\wedge F\subseteq L$ which implies $\overline c\wedge F=L$, i.e.\ $c\in c(F)$.
\item We have
\begin{align*}
\overline a\wedge\overline b & =\{x\wedge y\mid x,y\in L\text{ and }1\in(x^{00}\vee a^{00})\wedge(y^{00}\vee b^{00})\}, \\
        \overline{a\wedge b} & =\{x\in L\mid1\in x^{00}\vee(a\wedge b)^{00}\}.
\end{align*}
\item if $\overline x\wedge\overline y\subseteq\overline{x\wedge y}$ for all $x,y\in c(F)$ then for all $x,y\in c(F)$ we have
\[
L=L\wedge L=(\overline x\wedge F)\wedge(\overline y\wedge F)=(\overline x\wedge\overline y)\wedge(F\wedge F)=(\overline x\wedge\overline y)\wedge F\subseteq\overline{x\wedge y}\wedge F\subseteq L
\]
and hence $\overline{x\wedge y}\wedge F=L$ which means nothing else than $x\wedge y\in c(F)$.
\end{enumerate}
\end{proof}

The condition in {\rm(ii)} of Proposition~\ref{prop3} holds for all proper filters of the lattice from Fig.~2.

For $D$-Stonean lattices we can prove the following result.

\begin{theorem}\label{th1}
Let $\mathbf L=(L,\vee,\wedge,0,1)$ be a $D$-Stonean lattice and $F$ a $D$-filter of $\mathbf L$. Then $c(F)\subseteq F$.
\end{theorem}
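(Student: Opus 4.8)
The plan is to exploit the defining equation $c(F)=\{x\in L : \overline x\wedge F=L\}$ together with the $D$-Stonean identity $\overline x=x^{D}$ and the fact that a $D$-filter contains every dense element. First I would fix an arbitrary $x\in c(F)$ and unravel the membership $\overline x\wedge F=L$. Since $x$ itself is an element of $L$, this equality guarantees a representation $x=u\wedge v$ with $u\in\overline x$ and $v\in F$. Choosing to decompose $x$ itself, rather than some auxiliary element of $x^{0}$, is the crux of the whole argument.

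Next I would translate the condition $u\in\overline x$. Because $\mathbf L$ is $D$-Stonean, the observation following Definition~\ref{def1} gives $\overline x=x^{D}$, and by the definition of the operator $(\,\cdot\,)^{D}$ the membership $u\in x^{D}$ means precisely $u\vee x\in D$. Now I bring in the elementary geometric fact coming from the decomposition: since $x=u\wedge v\leq u$, we have $u\vee x=u$, and combining this with $u\vee x\in D$ yields $u\in D$.

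Finally I would invoke the hypothesis that $F$ is a $D$-filter. As $D\subseteq F$ we obtain $u\in F$, and together with $v\in F$ and the closure of the filter $F$ under meets this gives $x=u\wedge v\in F$. Since $x\in c(F)$ was arbitrary, this establishes $c(F)\subseteq F$.

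I expect no serious obstacle here; the single delicate point is the choice of which element to decompose. Decomposing $x$ itself is exactly what makes the inequality $x\leq u$ available and thereby collapses $u\vee x$ to $u$, forcing the $\overline x$-factor to be dense; decomposing a member of $x^{0}$ instead would not directly produce a dense factor dominating $x$. It is also worth remarking that among the two defining conditions of a $D$-Stonean lattice only \emph{(\ref{equ2})}, used in the equivalent form $\overline x=x^{D}$, is actually needed for this proof.
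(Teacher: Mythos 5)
Your proof is correct and takes essentially the same route as the paper: the paper likewise decomposes $a\in c(F)$ as $a=b\wedge c$ with $b\in\overline a$ and $c\in F$, uses absorption to get $b=b\vee(b\wedge c)=b\vee a\in D\subseteq F$ via the identification $\overline a=\{x\in L\mid x\vee a\in D\}$, and concludes $a=b\wedge c\in F$. Your closing observation that only condition (\ref{equ2}) (in the form $\overline x=x^D$) is needed, and not the Stonean condition (\ref{equ1}), is accurate and matches what the paper's proof actually uses.
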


\begin{proof}
Let $a\in c(F)$. Then $\overline a\wedge F=L$ and hence there exists some $b\in\overline a$ and some $c\in F$ with $b\wedge c=a$. Now
\[
\overline a=\{x\in L\mid1\in x^{00}\vee a^{00}\}=\{x\in L\mid x\vee a\in D\}
\]
and hence $b=b\vee(b\wedge c)=b\vee a\in D\subseteq F$ which implies $a=b\wedge c\in F$.
\end{proof}

Let us note that the condition that $\mathbf L$ is $D$-Stonean is only sufficient but not necessary.

\begin{example}
The filter $F_a$ of the non-Stonean lattice visualized in Fig.~2 is not coherent since $a\in F_a\setminus c(F_a)$, but $c(F_a)=F_e\subseteq F_a$.
\end{example}

Combining Lemma~\ref{lem2} and Theorem~\ref{th1} we obtain

\begin{corollary}
If $\mathbf L=(L,\vee,\wedge,0,1)$ is a $D$-Stonean lattice and $F$ a $D$-filter of $\mathbf L$ satisfying $\overline x\cup F=L$ for all $x\in F$ then $F$ is coherent.
\end{corollary}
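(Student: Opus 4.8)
The plan is to obtain the two inclusions $F\subseteq c(F)$ and $c(F)\subseteq F$ separately from the two results just cited, and then conclude $c(F)=F$, which is precisely the definition of $F$ being coherent. So essentially nothing new has to be proved; the work is only in checking that the hypotheses of Lemma~\ref{lem2} and of Theorem~\ref{th1} are both met under the assumptions of the corollary.

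First I would verify the hypotheses of Lemma~\ref{lem2}. Since $\mathbf L$ is $D$-Stonean it is in particular a bounded lattice satisfying the {\rm ACC} (this is built into Definition~\ref{def1}, as $D$-Stonean requires being Stonean, which already presupposes the {\rm ACC}), and a $D$-filter is by definition a filter. The remaining hypothesis $\overline x\cup F=L$ for all $x\in F$ is assumed outright in the corollary. Hence Lemma~\ref{lem2} applies and yields $F\subseteq c(F)$.

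Next I would invoke Theorem~\ref{th1}. Its hypotheses are that $\mathbf L$ be $D$-Stonean and that $F$ be a $D$-filter of $\mathbf L$, both of which are assumed in the corollary. Thus $c(F)\subseteq F$. Combining the two inclusions gives $c(F)=F$, so $F$ is coherent.

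There is no genuine obstacle here: the statement is a direct corollary, and the only point requiring a moment's attention is that the two cited results have mutually compatible hypotheses, in particular that ``$D$-Stonean'' supplies the {\rm ACC} needed by Lemma~\ref{lem2} and that ``$D$-filter'' is simultaneously the notion of filter required by Lemma~\ref{lem2} and the stronger notion required by Theorem~\ref{th1}. Once these are noted, the conclusion follows immediately by antisymmetry of inclusion.
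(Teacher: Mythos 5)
Your proof is correct and is exactly the paper's argument: the paper states this corollary with the words ``Combining Lemma~\ref{lem2} and Theorem~\ref{th1} we obtain,'' i.e.\ it likewise gets $F\subseteq c(F)$ from Lemma~\ref{lem2} and $c(F)\subseteq F$ from Theorem~\ref{th1}. Your added check that a $D$-Stonean lattice satisfies the {\rm ACC} and that a $D$-filter is in particular a filter is a sound verification of the hypotheses, matching the paper's intent.
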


Now we turn our attention to the so-called closed filters. A {\em filter} $F$ of $\mathbf L$ is called {\em closed} if it is a closed subset of $L$ as defined in the introduction, i.e.\ if $\overline{\overline F}=F$. Of course, $F\subseteq\overline{\overline F}$ holds for every filter $F$ of $\mathbf L$.

\begin{example}
The filter $F_a$ of the $D$-Stonean lattice visualized in Fig.~5 is closed and coherent since $\overline{\overline{F_a}}=\overline{\{b,c,1\}}=F_a$ and $c(F_a)=F_a$.
\end{example}

\begin{corollary}
Let $\mathbf L=(L,\vee,\wedge,0,1)$ be a bounded lattice satisfying the {\rm ACC} and $A\subseteq L$. Then the following holds:
\begin{enumerate}[{\rm(i)}]
\item $D\subseteq\overline A$,
\item $\overline A=L$ if and only if $A\subseteq D$,
\item every closed filter of $\mathbf L$ is a $D$-filter.
\end{enumerate}
\end{corollary}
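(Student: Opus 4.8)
The plan is to prove the three parts in the stated order, noting in advance that (iii) will drop out almost for free once (i) is available. The single computational fact underlying everything is that a dense element $d$ satisfies $d^{00}=1$, so that for every $y\in L$ we have $d^{00}\vee y^{00}=\{1\}\vee y^{00}=\{1\}$, and in particular $1\in d^{00}\vee y^{00}$.

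For (i), I would fix $A\subseteq L$ and an arbitrary $x\in D$. Then $x^{00}=1$, so the computation above gives $1\in x^{00}\vee y^{00}$ for every $y\in A$, which is precisely the defining condition for membership in $\overline A$. Hence $D\subseteq\overline A$. This is really just the ``$L\subseteq\overline D$'' half of Lemma~\ref{lem4}, read with the two coordinates interchanged, exploiting the evident symmetry of the relation $1\in x^{00}\vee y^{00}$.

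For (ii), the cleanest route is to combine Lemma~\ref{lem4} with the Galois-correspondence properties recorded in the Remark following it. Since $\overline A\subseteq L$ always holds, the equality $\overline A=L$ is equivalent to $L\subseteq\overline A$; by the equivalence ``$A\subseteq\overline B$ if and only if $B\subseteq\overline A$'' this is in turn equivalent to $A\subseteq\overline L$; and Lemma~\ref{lem4} gives $\overline L=D$. Chaining these yields $\overline A=L$ if and only if $A\subseteq D$. Should a self-contained argument be preferred, the forward direction follows from (i), while for the converse one tests $0\in\overline A$: using $0^{00}=0$ one obtains $1\in y^{00}$ for each $y\in A$, and since $y^{00}$ is an antichain containing the top element, this forces $y^{00}=1$, i.e.\ $y\in D$.

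For (iii), let $F$ be a closed filter, so $\overline{\overline F}=F$. Applying (i) with $A=\overline F$ gives $D\subseteq\overline{\overline F}=F$, so $F$ is a $D$-filter. The only point requiring attention is that (i) is quantified over \emph{all} subsets of $L$, so it may legitimately be instantiated at $A=\overline F$; after that no further work is needed. I do not expect a genuine obstacle anywhere: each step reduces either to the fact $d\in D\Rightarrow d^{00}=1$ or to the formal Galois machinery already established, and the only real care is the bookkeeping with the antichain condition on $y^{00}$ in the direct proof of the converse in (ii).
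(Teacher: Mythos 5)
Your proof is correct and takes essentially the same route as the paper, whose own proof just says ``use Lemma~\ref{lem4}'' for (i) and (ii) and derives (iii) from (i) by instantiating $A=\overline F$. Your direct computation in (i) and the self-contained converse in (ii) merely unwind the same facts (density means $x^{00}=1$, plus the Galois-correspondence properties) that the paper's appeal to Lemma~\ref{lem4} encapsulates.
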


\begin{proof}
\
\begin{enumerate}
\item[(i)] and (ii) We use Lemma~\ref{lem4}.
\item[(iii)] This follows from (i).
\end{enumerate}
\end{proof}

\begin{example}
The filter $F_e$ of the lattice depicted in Fig.~2 is both coherent and closed since $c(F_e)=F_e$ and $\overline{\overline{F_e}}=\overline{F_0}=F_e$, but its subfilter $F_f$ is neither coherent nor closed since $e\in c(F_f)\setminus F_f$ and $\overline{\overline{F_f}}=\overline{F_0}=F_e\neq F_f$.
\end{example}

\begin{remark}
Since the set of closed subsets of $L$ is closed under arbitrary intersection, the same is true for the set of closed filters and the latter forms a complete lattice with respect to inclusion. Moreover, for every filter $F$ of $\mathbf L$ there is a smallest closed filter of $\mathbf L$ including it.
\end{remark}

\section{Maximal, prime and median filters}

First let us recall some well-known classes of lattice filters. We say that a {\em filter} $F$ of a lattice $(L,\vee,\wedge)$ is
\begin{itemize}
\item {\em proper} if $F\neq L$,
\item {\em maximal} if it is a maximal proper filter,
\item {\em prime} if $x,y\in L$ and $x\vee y\in F$ imply $x\in F$ or $y\in F$.
\end{itemize}
It is well-known (see e.g.\ \cite{Sz}) that every maximal filter of a distributive lattice is prime. Unfortunately, this does not hold for non-distributive lattices. For example, the filter $F_a$ of the lattice in Fig.~5 is maximal, but it is not prime since $b\vee c=1\in F_a$, but $b,c\notin F_a$. However, we can prove the following:

\begin{theorem}\label{th3}
Let $\mathbf L=(L,\vee,\wedge,0,1)$ be a bounded lattice satisfying the {\rm ACC} and $F$ a proper filter of $\mathbf L$. Then the following holds:
\begin{enumerate}[{\rm(i)}]
\item $F$ is maximal if and only if $x^0\cap F\neq\emptyset$ for all $x\in L\setminus F$,
\item if $F$ is maximal then $F$ is a $D$-filter,
\item if $F$ is maximal and $(x\vee y)\wedge z=0$ for all $x,y\in L\setminus F$ and all $z\in F$ with $x\wedge z=y\wedge z=0$ then $F$ is prime.
\end{enumerate}
\end{theorem}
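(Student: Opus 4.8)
The plan is to prove the three parts in order, using the characterizations of maximality and the $D$-filter machinery already developed.

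For part (i), I would argue by contraposition in both directions. If $F$ is a proper filter and there exists some $x \in L \setminus F$ with $x^0 \cap F = \emptyset$, then I want to enlarge $F$ to a strictly larger proper filter, contradicting maximality. The natural candidate is the filter $G$ generated by $F \cup \{x\}$; because $\mathbf L$ satisfies the ACC, every filter is principal, so $G = F_{f \wedge x}$ for a suitable generator $f$ coming from $F$, or more concretely $G = \{z \in L \mid z \geq g \wedge x \text{ for some } g \in F\}$. The key point is that $G$ is proper: if $0 \in G$ then $g \wedge x = 0$ for some $g \in F$, and then $g$ (or a maximal element above the relevant witness) would land in $x^0 \cap F$, contradicting emptiness. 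Conversely, if $F$ is not maximal, pick a proper filter $G \supsetneq F$ and an element $x \in G \setminus F$; then $x^0 \cap F = \emptyset$ follows because any $y \in x^0 \cap F$ would force $x \wedge y = 0 \in G$, making $G$ improper.

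For part (ii), suppose $F$ is maximal but not a $D$-filter, so there is some dense $d \in D \setminus F$. By part (i) applied to $d$, we get $d^0 \cap F \neq \emptyset$, so some $y \in F$ satisfies $y \in d^0$, i.e.\ $d \wedge y = 0$. But $d$ dense means $d^0 = 0$, so $d^0 \cap F \subseteq \{0\} \cap F$; since $F$ is proper, $0 \notin F$, giving $d^0 \cap F = \emptyset$, a contradiction. Hence $D \subseteq F$.

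For part (iii), assume the stated hypothesis and suppose $x \vee y \in F$ with $x, y \notin F$; I must derive a contradiction or conclude $x \in F$ or $y \in F$. Applying part (i) to $x$ and to $y$ yields elements $u \in x^0 \cap F$ and $v \in y^0 \cap F$, so $x \wedge u = 0$ and $y \wedge v = 0$. Set $z := u \wedge v \wedge (x \vee y)$, which lies in $F$ since $F$ is a filter and all three factors are in $F$. Then $x \wedge z = x \wedge u \wedge v \wedge (x\vee y) = 0$ and similarly $y \wedge z = 0$. The hypothesis now gives $(x \vee y) \wedge z = 0$; but $z \leq x \vee y$ forces $(x \vee y) \wedge z = z$, so $z = 0 \in F$, contradicting properness. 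Therefore $x \in F$ or $y \in F$, i.e.\ $F$ is prime. The main obstacle I anticipate is part (i): making the generated filter $G$ concrete enough to extract a witness in $x^0 \cap F$ requires care, since $x^0$ is an antichain of maximal annihilating elements rather than a single pseudocomplement, so the element $g \in F$ with $g \wedge x = 0$ must be pushed up to a genuine \emph{maximal} such element to land in $x^0$; the ACC is exactly what guarantees this maximal element exists.
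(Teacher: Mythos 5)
Your proposal is correct and follows essentially the same route as the paper: the same filter $G=\{z\in L\mid g\wedge x\le z\text{ for some }g\in F\}$ generated by $F\cup\{x\}$, the same ACC-based push-up of the witness $g$ satisfying $g\wedge x=0$ to a maximal element of the annihilator lying in $x^0\cap F$, and the same contradiction arguments in (ii) and (iii) (your extra meet with $x\vee y$ in $z:=u\wedge v\wedge(x\vee y)$ is a harmless variant of the paper's $h:=f\wedge g$, since there $(x\vee y)\wedge h\in F$ already follows from $x\vee y,h\in F$). One stray remark should be deleted: the ACC does \emph{not} imply that every filter is principal (in the chain $1>c_1>c_2>\cdots>0$ the ACC holds, yet $L\setminus\{0\}$ is a non-principal filter; the paper claims principality only for finite lattices), but fortunately your argument never uses this claim, only the concrete description of $G$.
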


\begin{proof}
\
\begin{enumerate}[(i)]
\item Assume $F$ to be maximal and $a\in L\setminus F$. Then $G:=\{x\in L\mid\text{there exists some }f\in F\text{ with }f\wedge a\leq x\}$ is a filter of $\mathbf L$ strictly including $F$. Since $F$ is maximal we conclude $G=L$. Hence $0\in G$, i.e.\ there exists some $g\in F$ with $g\wedge a\leq0$, it means $g\wedge a=0$. Therefore there exists some $b\in a^0$ with $g\leq b$. Since $F$ is a filter we conclude $b\in F$ and hence $b\in a^0\cap F$ whence $a^0\cap F\neq\emptyset$. Conversely, assume $x^0\cap F\neq\emptyset$ for all $x\in L\setminus F$. Suppose $F$ not to be maximal. Then there exists a proper filter $H$ of $\mathbf L$ strictly including $F$. Let $c\in H\setminus F$. Then $c^0\cap F\neq\emptyset$. Let $d\in c^0\cap F$. Then $c,d\in H$. Since $H$ is a filter of $\mathbf L$ we have $0=c\wedge d\in H$ and hence $H=L$, a contradiction. This shows that $F$ is maximal.
\item Assume $F$ to be maximal, but not being a $D$-filter. Then there exists some $a\in D\setminus F$. According to (i) $0\cap F=a^0\cap F\neq\emptyset$ and hence $0\in F$ which implies $F=L$, a contradiction. Therefore $F$ is a $D$-filter.
\item Assume $F$ to be maximal and $(x\vee y)\wedge z=0$ for all $x,y\in L\setminus F$ and all $z\in F$ with $x\wedge z=y\wedge z=0$. Suppose $F$ to be not prime. Then there exist some $a,b\in L\setminus F$ with $a\vee b\in F$. According to (i) we have $a^0\cap F,b^0\cap F\neq\emptyset$. Let $f\in a^0\cap F$ and $g\in b^0\cap F$ and put $h:=f\wedge g$. Then $h\in F$ and $a\wedge f=b\wedge g=0$ and hence $a\wedge h=b\wedge h=0$. By the above assumption we conclude $0=(a\vee b)\wedge h\in F$ which implies that $F$ is not proper which is a contradiction. Hence $F$ is prime. 
\end{enumerate}
\end{proof}

Of course, condition (iii) of Theorem~\ref{th3} holds for any distributive lattice. However, the following example shows that it may be satisfied also in a non-distributive lattice.

\begin{example}
The non-distributive lattice visualized in Fig.~6

\vspace*{-4mm}

\begin{center}
\setlength{\unitlength}{1.5mm}
\begin{picture}(35,45)
\put(10,30){\circle*{1.4}}
\put(20,10){\circle*{1.4}}
\put(20,20){\circle*{1.4}}
\put(20,30){\circle*{1.4}}
\put(20,40){\circle*{1.4}}
\put(30,30){\circle*{1.4}}
\put(20,10){\line(0,1){30}}
\put(20,20){\line(-1,1){10}}
\put(20,20){\line(1,1){10}}
\put(20,40){\line(-1,-1){10}}
\put(20,40){\line(1,-1){10}}
\put(19.2,6.9){$0$}
\put(21,19){$a$}
\put(7.5,29){$b$}
\put(21,29){$c$}
\put(31,29){$d$}
\put(19.2,41.2){$1$}
\put(16,1){{\rm Fig.~6}}
\put(6,-3){{\rm Non-distributive lattice}}
\end{picture}

\vspace*{2mm}

\end{center}
satisfies the condition in {\rm(iii)} of Theorem~\ref{th3} for the maximal proper filter $F_a$. In accordance with this theorem, this filter is prime.
\end{example}

In the following we adopt a certain modification of the concept of a median filter defined in \cite{Ra}.

\begin{definition}
Let $\mathbf L=(L,\vee,\wedge,0,1)$ be a bounded lattice satisfying the {\rm ACC}. The filter $F$ of $\mathbf L$ is called {\em median} if it is maximal and if for each $x\in F$ there exists some $y\in L\setminus F$ with $1\in x^{00}\vee y^{00}$.
\end{definition}

\begin{example}
The filter $F_a$ of the non-Stonean lattice $(L,\vee,\wedge,0,1)$ visualized in Fig.~3 is median, closed and coherent since $b\in L\setminus F_a$, $1\in x^{00}\vee b^{00}$ for every $x\in F_a$ and $\overline{\overline{F_a}}=\overline{\{b,c,d,f,g,1\}}=F_a$. The filter $F_a$ of the non-Stonean lattice $(L,\vee,\wedge,0,1)$ depicted in Fig.~4 is prime, median, closed and coherent since $c\in L\setminus F_a$, $1\in x^{00}\vee c^{00}$ for all $x\in F_a$ and $\overline{\overline{F_a}}=\overline{F_c}=F_a$. We list all closed filters of the lattices from Fig.~1 to Fig.~5:
\[
\begin{array}{c|l}
\text{Fig.} & \text{closed filters} \\
\hline
          1 & F_0,F_d,F_f,F_1 \\
\hline
          2 & F_0,F_e \\
\hline
          3 & F_0,F_a,F_d,F_f \\
\hline
          4 & F_0,F_a,F_c,F_i \\
\hline
          5 & F_0,F_b,F_c,F_d,F_1
\end{array}
\]
\end{example}

Now we present several basic properties of proper, prime and median filters.

\begin{theorem}
Let $\mathbf L=(L,\vee,\wedge,0,1)$ be a bounded lattice satisfying the {\rm ACC}, $F$ a proper filter of $\mathbf L$ and $a\in L$. Then the following holds:
\begin{enumerate}[{\rm(i)}]
\item If $x^{00}\vee y^{00}\leq_1(x\vee y)^{00}$ for all $x,y\in L$, $F$ is a prime $D$-filter and $a\in L\setminus F$ then ${\overline a}\subseteq F$,
\item if $x^{00}\vee y^{00}\leq_1(x\vee y)^{00}$ for all $x,y\in L$, $F$ is a median $D$-filter and $a\in F$ then $\overline{\overline a}\subseteq F$,
\item If $a\in F$ then $a^0\not\subseteq F$,
\item if $\mathbf L$ is $D$-Stonean and $F$ a prime $D$-filter of $\mathbf L$ then $a\in F$ if and only if $a^0\not\subseteq F$.
\end{enumerate}
\end{theorem}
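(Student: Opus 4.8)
The four parts fall into two easy families and one hard one: (i) is a primeness argument, (iii) and (iv) turn on the single identity $x\wedge x^0=0$, and (ii) is the delicate case.

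For (i) I would argue elementwise. Take $b\in\overline a$, so $1\in b^{00}\vee a^{00}$. The hypothesis $b^{00}\vee a^{00}\leq_1(b\vee a)^{00}$ supplies some $t\in(b\vee a)^{00}$ with $1\leq t$; hence $t=1$, and since $(b\vee a)^{00}$ is an antichain this forces $(b\vee a)^{00}=1$, i.e.\ $b\vee a\in D$. As $F$ is a $D$-filter, $b\vee a\in D\subseteq F$, and because $F$ is prime with $a\notin F$ we conclude $b\in F$. Thus $\overline a\subseteq F$.

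For (iii) and (iv) I would use $x\wedge x^0=0$ from Theorem~\ref{th2}(i) together with the fact that $a^0\neq\emptyset$. Fix $b\in a^0$. For (iii): if $a\in F$ and $a^0\subseteq F$ then $b\in F$, so $0=a\wedge b\in F$, contradicting that $F$ is proper; hence $a^0\not\subseteq F$. For (iv) the forward implication is precisely (iii). For the converse, suppose $a^0\not\subseteq F$ and pick $b\in a^0\setminus F$; since $\mathbf L$ is $D$-Stonean, the lemma $a\vee a^0\subseteq D$ gives $a\vee b\in D\subseteq F$, and primeness together with $b\notin F$ then forces $a\in F$.

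Part (ii) is where I expect the real work. The plan is to read $z\in\overline{\overline a}$ through the Galois correspondence of the Remark as the inclusion $\overline a\subseteq\overline z$, and to feed it the median witness. Since $a\in F$ is median, pick $y\in L\setminus F$ with $1\in a^{00}\vee y^{00}$, so that $y\in\overline a\subseteq\overline z$; then $1\in y^{00}\vee z^{00}$, and exactly as in (i) the hypothesis yields $a\vee y\in D$ and $z\vee y\in D$. Arguing by contradiction, suppose $z\notin F$; since median filters are maximal, Theorem~\ref{th3}(i) gives some $c\in z^0\cap F$, and replacing $a$ by $a\wedge c\in F$ (which only shrinks $a^{00}$ and so preserves $z\in\overline{\overline a}$) I may assume $a\wedge z=0$. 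One sub-case closes immediately: if $z\in\overline a$ then $\overline a\subseteq\overline z$ gives $z\in\overline z$, i.e.\ $z^{00}=1$, so $z\in D\subseteq F$, contradiction.

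The genuinely hard step is to pass from the two dense joins back to membership of $y$ without distributivity, since a median filter need not be prime (the filter $F_a$ in Fig.~3 is median but not prime). Because $D$ is a filter, $(a\vee y)\wedge(z\vee y)\in D\subseteq F$; were $\mathbf L$ distributive this meet would equal $(a\wedge z)\vee y=y$, placing $y\in F$ and contradicting $y\notin F$. The crux is therefore to recover this computation at the level of double negations, using Theorem~\ref{th2}(x) to turn $a\wedge z=0$ into $a^{00}\wedge z^{00}=0$ and feeding it through the hypothesis $x^{00}\vee y^{00}\leq_1(x\vee y)^{00}$ so as to manufacture an element of $\overline a$ lying outside $\overline z$. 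Bridging the non-distributive gap at exactly this meet/join interaction is the step I expect to be the main obstacle, and it is precisely what the standing hypothesis on $(x\vee y)^{00}$ is there to make possible.
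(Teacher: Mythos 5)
Your arguments for (i), (iii) and (iv) are correct and are essentially the paper's own proofs: in (i) the antichain argument forces $(a\vee b)^{00}=1$, hence $a\vee b\in D\subseteq F$, and primeness with $a\notin F$ yields $b\in F$; (iii) is exactly the paper's computation from $a\wedge a^0=0$ together with $a^0\neq\emptyset$; and in (iv) the forward direction is (iii), while your converse via the lemma $a\vee a^0\subseteq D$ is the same step the paper carries out by hand ($b\in a^0\setminus F$ gives $1\in a^{00}\vee b^{00}$, so $a\vee b\in D\subseteq F$, and primeness forces $a\in F$).

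The genuine gap is (ii): you do not prove it --- you end by naming the ``main obstacle'' without resolving it, and the route you sketch (reduce to $a\wedge z=0$ via a witness $c\in z^0\cap F$ from Theorem~\ref{th3}(i), then try to simulate the distributive identity $(a\vee y)\wedge(z\vee y)=y$ at the level of the operation $x\mapsto x^{00}$) has no visible way to close, since the standing hypothesis $x^{00}\vee y^{00}\leq_1(x\vee y)^{00}$ controls joins only and says nothing about the meet $(a\vee y)\wedge(z\vee y)$. The idea you are missing is that (ii) is a short consequence of (i) via the Galois correspondence, with no meet computation at all: given $b\in\overline{\overline a}$, the median property supplies $c\in L\setminus F$ with $1\in a^{00}\vee c^{00}$, i.e.\ $c\in\overline a$; since $\{c\}\subseteq\overline a$, antitonicity gives $\overline{\overline a}\subseteq\overline c$, so $b\in\overline c$, and (i) applied to $c\notin F$ yields $\overline c\subseteq F$, hence $b\in F$. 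Note that this invocation of (i) requires $F$ to be prime; your observation that a median filter need not be prime (the filter $F_a$ of the lattice in Fig.~3) is accurate and in fact pinpoints a tacit assumption in the paper's own proof of (ii), which applies (i) to the median $D$-filter without comment (compare the later proposition, where ``median prime'' is assumed explicitly). So your skepticism about a prime-free argument was well placed, but the productive move was to run the Galois reduction to (i), flagging primeness as an implicit hypothesis, rather than to attempt the direct distributivity-replacement construction, which remains unfinished.
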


\begin{proof}
\
\begin{enumerate}[(i)]
\item Assume $x^{00}\vee y^{00}\leq_1(x\vee y)^{00}$ for all $x,y\in L$ and let $F$ be a prime $D$-filter of $\mathbf L$, $a\in L\setminus F$ and $b\in{\overline a}$. Then $1\in a^{00}\vee b^{00}\leq_1(a\vee b)^{00}$. Since $(a\vee b)^{00}$ is an antichain we have $(a\vee b)^{00}=1$ which implies $(a\vee b)^0=(a\vee b)^{000}=0$, i.e.\ $a\vee b\in D$. Since $F$ is a $D$-filter, we have $D\subseteq F$ and therefore $a\vee b\in F$. Because $a\notin F$ and $F$ is prime we conclude $b\in F$ proving ${\overline a}\subseteq F$.
\item Assume $x^{00}\vee y^{00}\leq_1(x\vee y)^{00}$ for all $x,y\in L$ and let $F$ be a median $D$-filter of $\mathbf L$, $a\in F$ and $b\in\overline{\overline a}$. Since $F$ is median there exists some $c\in L\setminus F$ with $1\in a^{00}\vee c^{00}$. Hence $c\in{\overline a}=\overline{\overline{\overline a}}\subseteq{\overline b}$. Since $c\notin F$ we have ${\overline c}\subseteq F$ according to (i) and we obtain $b\in\overline{\overline b}\subseteq{\overline c}\subseteq F$ proving $\overline{\overline a}\subseteq F$.
\item If we would have $a^0\subseteq F$ for some $a\in F$ then $0=a\wedge a^0\subseteq F$ and hence $F=L$ contradicting the assumption that $F$ is proper.
\item Suppose $\mathbf L$ to be $D$-Stonean, $F$ to be a prime $D$-filter of $\mathbf L$ and $a^0\not\subseteq F$. Then there exists some $b\in a^0\setminus F$. Since $\mathbf L$ is $D$-Stonean we have $1\in a^{00}\vee b^{00}$ and hence $a\vee b\in D\subseteq F$ according to the assumption that $F$ is a $D$-filter. Because $F$ is prime and $b\notin F$ we obtain $a\in F$. The converse implication follows from (i).
\end{enumerate}
\end{proof}

The next result illuminates the relationship between coherent and median filters.

\begin{theorem}
Let $\mathbf L=(L,\vee,\wedge,0,1)$ be a bounded lattice satisfying the {\rm ACC} and $F$ a maximal filter of $\mathbf L$. Then the following holds:
\begin{enumerate}[{\rm(i)}]
\item If $F$ is coherent then it is median,
\item if $\overline F\not\subseteq F$ then $F$ is median,
\item if $F=\overline{L\setminus F}$ then $F$ is median.
\end{enumerate}
\end{theorem}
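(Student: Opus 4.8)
The plan is to reduce the second clause of medianness to a single inclusion statement and then settle all three parts from it. Since $F$ is assumed maximal, hence proper, in every case, the set $L\setminus F$ is non-empty and the first clause (maximality) is automatic; what must be checked is only that for each $x\in F$ there is some $y\in L\setminus F$ with $1\in x^{00}\vee y^{00}$. I would begin by observing that $y\in\overline x$ means exactly $1\in y^{00}\vee x^{00}=x^{00}\vee y^{00}$, so this clause is equivalent to $\overline x\cap(L\setminus F)\neq\emptyset$, that is, to $\overline x\not\subseteq F$. Thus the whole theorem amounts to proving, in each of the three situations, that $\overline x\not\subseteq F$ holds for every $x\in F$.

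For (ii) I would use the immediate identity $\overline F=\bigcap_{y\in F}\overline y$, which follows directly from the definition of the operator $\overline{\phantom{x}}$. Assuming $\overline F\not\subseteq F$, I pick $w\in\overline F\setminus F$; then for every $x\in F$ we have $w\in\overline F\subseteq\overline x$ while $w\notin F$, so $\overline x\not\subseteq F$ and medianness follows. For (iii), from the hypothesis $F=\overline{L\setminus F}$ and an arbitrary $x\in F$ I obtain $1\in x^{00}\vee y^{00}$ for all $y\in L\setminus F$; since $F$ is proper, $L\setminus F$ is non-empty, and any element $y$ chosen from it witnesses $\overline x\not\subseteq F$.

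For (i) I would argue by contradiction. If $F$ is coherent but not median, the reformulation above produces some $x\in F$ with $\overline x\subseteq F$. Because $F$ is a filter, and therefore absorbs meets, $\overline x\wedge F\subseteq F\neq L$. On the other hand, coherence gives $c(F)=F$, so $x\in F$ entails $x\in c(F)$, which by the definition of $c$ means $\overline x\wedge F=L$. This contradiction forces $\overline x\not\subseteq F$ for every $x\in F$, so $F$ is median.

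I expect parts (ii) and (iii) to be essentially direct unfoldings of the definitions once the equivalence ``$F$ maximal is median $\iff$ $\overline x\not\subseteq F$ for all $x\in F$'' is in place, and the one point deserving care is the bookkeeping in (i): one must verify that $\overline x\subseteq F$ genuinely forces $\overline x\wedge F\subseteq F$ (invoking closure of $F$ under $\wedge$) and that it is precisely the properness of $F$ that rules out $\overline x\wedge F=L$. The single conceptual step driving all three arguments is the recognition that medianness of a maximal filter is nothing more than the failure of $\overline x$ to be contained in $F$ for each $x\in F$.
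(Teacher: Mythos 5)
Your proof is correct and takes essentially the same route as the paper: parts (ii) and (iii) are the same direct unfoldings of the definitions, and your contradiction argument in (i) --- if $\overline x\subseteq F$ then $L=\overline x\wedge F\subseteq F$, contradicting properness --- is just the contrapositive of the paper's extraction of a witness $b\in\overline a\setminus F$ from $0\in\overline a\wedge F=L$, relying on exactly the same facts (coherence, closure of $F$ under $\wedge$, properness). Your upfront reformulation that a maximal filter is median if and only if $\overline x\not\subseteq F$ for every $x\in F$ is a tidy packaging of what the paper uses implicitly, not a different method.
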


\begin{proof}
Let $a\in F$.
\begin{enumerate}[(i)]
\item Assume $F$ to be coherent. Then $a\in c(F)$, i.e.\ $\overline a\wedge F=L$ and hence there exists some $b\in\overline a$ and some $c\in F$ with $b\wedge c=0$. Because of $b\in\overline a$ we have $1\in a^{00}\vee b^{00}$. Now $b\in F$ would imply $0=b\wedge c\in F$ and hence $F=L$ contradicting the maximality of $F$. Hence $b\notin F$ showing $F$ to be median.
\item Assume $\overline F\not\subseteq F$. Then there exists some $b\in\overline F\setminus F$. Hence $1\in a^{00}\vee b^{00}$ and $b\in L\setminus F$ proving $F$ to be median.
\item Since $a\in\overline{L\setminus F}$ we have $1\in a^{00}\vee x^{00}$ for all $x\in L\setminus F$. Because $F$ is proper there exists such an $x$.
\end{enumerate}
\end{proof}

An interesting property of filters being both median and prime shown for distributive pseudocomplemented lattices by M.S.~Rao \cite{Ra} can be proved also in a more general case.

\begin{proposition}
Let $\mathbf L=(L,\vee,\wedge,0,1)$ be a $D$-Stonean lattice, $F$ a median prime filter of $\mathbf L$ and $a,b\in L$. Then the following holds:
\begin{enumerate}[{\rm(i)}]
\item If $F$ is a $D$-filter of $\mathbf L$, $a\in F$ and $\overline a=\overline b$ then $b\in F$,
\item if $a\vee b\in F$ then there exists some $c\in L\setminus F$ such that $\{a\vee c,b\vee c\}\cap D\neq\emptyset$.
\end{enumerate}
\end{proposition}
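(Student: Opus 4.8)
The plan is to reduce both parts to the $D$-Stonean equivalence (\ref{equ2}), which permits reading the condition $1\in x^{00}\vee y^{00}$ simply as $x\vee y\in D$, together with the characterization $\overline a=a^D=\{x\in L\mid x\vee a\in D\}$ that holds under (\ref{equ2}). Through this dictionary the defining property of a median filter says precisely that for every $a\in F$ there is a witness $c\in L\setminus F$ lying in $\overline a$, i.e.\ with $a\vee c\in D$. Each part then follows by feeding such a witness into the primeness of $F$.

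For (i), I would take $a\in F$ and invoke medianness to produce some $y\in L\setminus F$ with $1\in a^{00}\vee y^{00}$; since $x^{00}\vee y^{00}$ is symmetric, this says exactly that $y\in\overline a$. Because $\overline a=\overline b$ by hypothesis, we get $y\in\overline b$, that is $1\in b^{00}\vee y^{00}$, and (\ref{equ2}) converts this into $b\vee y\in D$. As $F$ is a $D$-filter we have $D\subseteq F$, hence $b\vee y\in F$; primeness of $F$ together with $y\notin F$ then forces $b\in F$, as required. (The $D$-filter hypothesis is actually automatic here, since a median filter is maximal and every maximal proper filter is a $D$-filter by Theorem~\ref{th3}(ii), but it is cleaner to use it directly.)

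For (ii), since $F$ is prime and $a\vee b\in F$, at least one of $a,b$ belongs to $F$. Applying medianness to whichever of the two lies in $F$ produces some $c\in L\setminus F$ with $1\in a^{00}\vee c^{00}$, respectively $1\in b^{00}\vee c^{00}$; by (\ref{equ2}) this means $a\vee c\in D$, respectively $b\vee c\in D$, so in either case $\{a\vee c,b\vee c\}\cap D\neq\emptyset$ with $c\notin F$ as demanded.

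The manipulations are entirely routine, and the only genuinely substantive step is the opening translation: recognizing that a median witness for $a$ is nothing but an element of $\overline a$ outside $F$. Once this identification is in place, and once (\ref{equ2}) is used to pass freely between $1\in x^{00}\vee y^{00}$ and $x\vee y\in D$, primeness carries out all the remaining work, so I anticipate no real obstacle beyond fixing this dictionary at the start.
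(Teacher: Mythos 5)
Your proof is correct and follows essentially the same route as the paper's: in (i) the median witness $c\in L\setminus F$ lies in $\overline a=\overline b$, condition (\ref{equ2}) turns $1\in b^{00}\vee c^{00}$ into $b\vee c\in D\subseteq F$, and primeness yields $b\in F$; in (ii) primeness selects one of $a,b$ in $F$ and medianness plus (\ref{equ2}) produces the required $c$. Your parenthetical remark that the $D$-filter hypothesis in (i) is redundant (median filters are maximal, and maximal filters are $D$-filters by Theorem~\ref{th3}(ii)) is a correct observation that the paper does not make.
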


\begin{proof}
\
\begin{enumerate}[(i)]
\item Assume $F$ to be a $D$-filter of $\mathbf L$, $a\in F$ and $\overline a=\overline b$. Since $F$ is median there exists some $c\in L\setminus F$ with $1\in a^{00}\vee c^{00}$. Hence $c\in\overline a=\overline b$, i.e.\ $1\in b^{00}\vee c^{00}$. Since $\mathbf L$ is $D$-Stonean and $F$ is a $D$-filter we have $b\vee c\in D\subseteq F$. Because $F$ is prime we conclude $b\in F$.
\item Suppose $a\vee b\in F$. Since $F$ is prime we have $a\in F$ or $b\in F$. Because $F$ is median, in the first case we see that there exists some $c\in L\setminus F$ with $1\in a^{00}\vee c^{00}$. Since $\mathbf L$ is $D$-Stonean we obtain $a\vee c\in D$. The case $b\in F$ can be treated analogously.
\end{enumerate}
\end{proof}

{\bf Data availability statement} No datasets were generated or analyzed during the current study.

Authors' addresses:

Ivan Chajda \\
Palack\'y University Olomouc \\
Faculty of Science \\
Department of Algebra and Geometry \\
17.\ listopadu 12 \\
771 46 Olomouc \\
Czech Republic \\
ivan.chajda@upol.cz

Miroslav Kola\v r\'ik \\
Palack\'y University Olomouc \\
Faculty of Science \\
Department of Computer Science \\
17.\ listopadu 12 \\
771 46 Olomouc \\
Czech Republic \\
miroslav.kolarik@upol.cz

Helmut L\"anger \\
TU Wien \\
Faculty of Mathematics and Geoinformation \\
Institute of Discrete Mathematics and Geometry \\
Wiedner Hauptstra\ss e 8-10 \\
1040 Vienna \\
Austria, and \\
Palack\'y University Olomouc \\
Faculty of Science \\
Department of Algebra and Geometry \\
17.\ listopadu 12 \\
771 46 Olomouc \\
Czech Republic \\
helmut.laenger@tuwien.ac.at


\begin{thebibliography}9
\bibitem{BH}
R.~Balbes and A.~Horn, Stone lattices. Duke Math.\ J.\ {\bf37} (1970), 537--545.
\bibitem{CL}
I.~Chajda and H.~L\"anger H, Algebraic structures formalizing the logic with unsharp implication and negation. Logic J.\ IGPL (submitted).
\bibitem F
O.~Frink, Pseudo-complements in semi-lattices. Duke Math.\ J.\ {\bf29} (1962), 505--514.
\bibitem{R22}
M.S.~Rao, Median prime ideals of pseudo-complemented distributive lattices. Arch.\ Math.\ (Brno) {\bf58} (2022), 213--226.
\bibitem{Ra}
M.S.~Rao, Median filters of pseudo-complemented distributive lattices. Discuss.\ Math.\ General Algebra Appl.\ (to appear).
\bibitem{Sp}
T.P.~Speed, On Stone lattices. J.\ Austral.\ Math.\ Soc.\ {\bf9} (1969), 297--307.
\bibitem{Sz}
G.~Sz\'asz, Introduction to Lattice Theory. Akad\'emiai Kiad\'o, Budapest 1963.
\end{thebibliography}
\end{document}